\theoremstyle{plain}
\newtheorem{thm}{Theorem}
\newtheorem{prop}{Proposition}
\newtheorem{cor}{Corollary}
\newtheorem{lem}{Lemma}
\theoremstyle{definition}
\newtheorem{defi}{Definition}
\newtheorem{rem}{Remark}
\newtheorem{rems}{Remarks}
\newtheorem{qu}{Question}
\keywords{word problem, G-automaton, abelian group}
\setlist[enumerate]{label = \textup{(\arabic*)}, ref = \textup{(\arabic*)}}
\theoremstyle{plain} 
\crefname{defi}{Definition}{Definitions}
\crefname{thm}{Theorem}{Theorems}
\crefname{lem}{Lemma}{Lemmata}
\crefname{cor}{Corollary}{Corollaries}
\crefname{rem}{Remark}{Remarks}
\crefname{qu}{Question}{Questions}
\DeclareMathOperator{\WP}{WP}
\newcommand{\source}{\mathop{\mathsf{s}}}
\newcommand{\target}{\mathop{\mathsf{t}}}
\newcommand{\Label}{\ell}
\newcommand{\subword}{\sqsubseteq}
\newcommand{\scsubword}{\sqsubseteq_{\mathrm{sc}}}
\newcommand{\pinit}{p_{\mathrm{init}}}
\newcommand{\pter}{p_{\mathrm{ter}}}
\newcommand{\Z}{\mathbb{Z}}
\newcommand{\len}[1]{\lvert #1 \rvert}
\newcommand{\card}[1]{\lvert #1 \rvert}
\newcommand{\setmid}{\mathrel{}\middle|\mathrel{}}
\begin{document}

\title[More on Groups and Counter Automata]
{More on Groups and Counter Automata}
\author[Takao Yuyama]{Takao Yuyama}	
\address{Department of Mathematics, Tokyo Institute of Technology, Tokyo, Japan}	
\email{yuyama.t.aa@m.titech.ac.jp}  
\thanks{This work was supported by JSPS KAKENHI Grant Number 20J23039}	




\begin{abstract}
	Elder, Kambites, and Ostheimer showed that if the word problem of a finitely generated group \(H\)
	is accepted by a \(G\)-automaton for an abelian group \(G\), then \(H\) is virtually abelian.
	We give a new, elementary, and purely combinatorial proof to the theorem.
	Furthermore, our method extracts an explicit connection between the two groups \(G\) and \(H\) from the automaton
	as a group homomorphism from a subgroup of \(G\) onto a finite index subgroup of \(H\).
\end{abstract}

\maketitle

\section{Introduction}

For a group \(G\), a \emph{\(G\)-automaton} is a finite automaton augmented with a register that stores an element of \(G\).
Such an automaton first initializes the register with the identity element \(1_G\) of \(G\)
and may update the register content by multiplying by an element of \(G\) during the computation.
The automaton accepts an input word if the automaton can reach a terminal state and the register content is \(1_G\)
when the entire word is read.
(For the precise definition, see \Cref{subsection-G-automata}.)
For a positive integer \(n\), \(\Z^n\)-automata are the same as \emph{blind \(n\)-counter automata},
which were defined and studied by Greibach~\cites{MR411257, MR513714}.

The notion of \(G\)-automata is discovered repeatedly by several different authors.
The name ``\(G\)-automaton'' is due to Kambites~\cite{MR2259632}.
(In fact, they defined the notion of \(M\)-automata for any monoid \(M\).)
Dassow--Mitrana~\cite{MR1809380} and Mitrana--Stiebe~\cite{MR1807922} use \emph{extended finite automata} (EFA) over \(G\)
instead of \(G\)-automata.

For a finitely generated group \(G\), the \emph{word problem} of \(G\), with respect to a fixed finite generating set of \(G\),
is the set of words over the generating set representing the identity element of \(G\)
(see \Cref{subsection-word-problem} for the precise definitions).
For several language classes, the class of finitely generated groups whose word problem is in the class is determined~\citelist{
	\cite{MR312774}
	\cite{MR710250}
	\cite{MR0357625}
	\cite{MR130286}
	\cite{MR2483126}
	\cite{MR2470538}
	\cite{MR3375036}
},
and many attempts are made for other language classes~\citelist{
	\cite{MR3200359}
	\cite{MR3394671}
	\cite{MR2132375}
	\cite{MR4388995}
	\cite{kanazawa-salvati-2012-mix}
	\cite{MR2274726}
	\cite{MR4000593}
	\cite{MR3871464}
}.
One of the most remarkable theorems about word problems is the well-known result due to Muller and Schupp~\cite{MR710250},
which states that, with the theorem by Dunwoody~\cite{MR807066},
a group has a context-free word problem if and only if it is virtually free.
These theorems suggest deep connections between group theory and formal language theory.

Involving both \(G\)-automata and word problems,
the following broad question was posed implicitly by Elston and Ostheimer~\cite{MR2064297}
and explicitly by Kambites~\cite{MR2259632}.
\begin{qu}\label[qu]{qu-grand}
	For a given group \(G\), is there any connection between the structural property of \(G\)
	and of the collection of groups whose word problems are accepted by \emph{non-deterministic} \(G\)-automata?
\end{qu}
\noindent Note that by \(G\)-automata, we always mean non-deterministic \(G\)-automata.
As for \emph{deterministic} \(G\)-automata, the following theorem is known.
\begin{thm}[Kambites~\cite{MR2259632}*{Theorem 1}, 2006]\label[thm]{theorem-deterministic}
	Let \(G\) and \(H\) be groups with \(H\) finitely generated.
	Then the word problem of \(H\) is accepted by a \emph{deterministic} \(G\)-automaton if and only if
	\(H\) has a finite index subgroup which embeds in \(G\).
\end{thm}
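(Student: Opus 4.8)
The plan is to prove the two implications of the biconditional separately, the ``if'' direction by an explicit construction and the ``only if'' direction by analysing the runs of a recognising automaton. Throughout I fix a finite \emph{symmetric} generating set $X$ of $H$ --- adjoining formal inverses of generators affects neither the right-hand side of the statement nor the left, since the class of languages accepted by deterministic $G$-automata is closed under inverse monoid homomorphisms between free monoids --- and for $w \in X^*$ I write $h(w) \in H$ for the element it represents and $\overline w$ for the formal inverse word (the reverse of $w$ with every letter inverted), so that $h(\overline w) = h(w)^{-1}$ and $w\overline w, \overline w w \in \WP(H)$.

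For the ``if'' direction, let $K \le H$ have finite index, let $\phi\colon K \hookrightarrow G$ be an embedding, fix a right transversal $T \ni 1_H$ of $K$ in $H$, and for $g \in H$ write $\overline g \in T$ for the representative of $Kg$. I would take the deterministic $G$-automaton with state set $T$, with initial state and unique terminal state $1_H$, in which reading a letter $x$ in state $t$ moves to the state $\overline{tx}$ and multiplies the register by $\phi\bigl(t x (\overline{tx})^{-1}\bigr)$; this multiplier makes sense because $Ktx = K\overline{tx}$ forces $tx(\overline{tx})^{-1} \in K$. Reading $w = x_1\cdots x_k$ then visits states $t_0 = 1_H, t_1, \dots, t_k$ with $t_i = \overline{x_1\cdots x_i}$, and since $\phi$ is a homomorphism, telescoping the product of multipliers shows the register finishes holding $\phi\bigl(t_0(x_1\cdots x_k)t_k^{-1}\bigr) = \phi\bigl(h(w)\,t_k^{-1}\bigr)$. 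So $w$ is accepted exactly when $t_k = 1_H$, i.e.\ $h(w) \in K$, and $\phi(h(w)) = 1_G$, i.e.\ --- $\phi$ being injective --- when $h(w) = 1_H$; hence this automaton accepts $\WP(H)$.

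For the converse, suppose a deterministic $G$-automaton $\mathcal A$ with initial state $q_0$, state set $Q$, terminal set $F$, transition function $\delta$ and register-multiplier function $\mu$ accepts $\WP(H)$. For $w \in X^*$ let $(q_w, r_w) \in Q \times G$ be the unique configuration reached from $(q_0, 1_G)$ by reading $w$, so $w \in \WP(H)$ iff $q_w \in F$ and $r_w = 1_G$ (in particular $q_0 \in F$); I will use repeatedly the affine composition rule $r_{wu} = r_w \cdot \mu(q_w, u)$, in which $\mu(q_w, u)$ depends only on the state $q_w$ and the word $u$. \textbf{The key step, and the one I expect to demand the most care, is a normalisation of $\mathcal A$:} $\WP(H)$ is accepted by a deterministic $G$-automaton in which \emph{every} word representing $1_H$ returns the automaton to its initial state --- equivalently, one in which $F = \{q_0\}$. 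The natural route is to pass to the minimal deterministic $G$-automaton for $\WP(H)$, which requires setting up a Myhill--Nerode-type minimisation in this category; the delicate point is that one must identify states that exhibit the same acceptance behaviour \emph{up to a uniform register shift} and then keep the bookkeeping of those shifts coherent along each class, which, with $G$ possibly nonabelian, has to be organised with care. (The collapse of the terminal set to $\{q_0\}$ reflects that the only left quotient of the group language $\WP(H)$ contained in $\WP(H)$ is the one by the empty word.)

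Granting the normalisation, here is the conclusion. Put $B := \{w \in X^* : q_w = q_0\}$, a submonoid of $X^*$; by the affine rule, $w \mapsto r_w$ restricts to a monoid homomorphism $B \to G$. Let $K := \{h(w) : w \in B\}$, a submonoid of $H$. Then $K$ is a \emph{subgroup}: for $w \in B$ one has $q_{\overline w} = \delta(q_0, \overline w) = \delta(q_w, \overline w) = q_{w\overline w} = q_0$, the last equality by the normalisation since $w\overline w$ represents $1_H$, so $\overline w \in B$ and $h(w)^{-1} = h(\overline w) \in K$. $K$ has \emph{finite index}: the rule $q_w \mapsto K\,h(w)$ is a well-defined map from the reachable states of $\mathcal A$ onto the right cosets of $K$ --- well-defined because $q_w = q_{w'}$ forces $q_{w'\overline w} = \delta(q_{w'},\overline w) = \delta(q_w,\overline w) = q_{w\overline w} = q_0$, hence $w'\overline w \in B$ and $h(w')h(w)^{-1} \in K$ --- so $[H:K] \le \card{Q} < \infty$. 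And $K$ \emph{embeds in $G$}: the homomorphism $B \to G$ descends along the surjection $h|_B\colon B \to K$ to a homomorphism $\bar\phi\colon K \to G$ with $\bar\phi(h(w)) = r_w$, because for $w,w' \in B$ with $h(w) = h(w')$ one has $w\overline{w'} \in B$ representing $1_H$, so $1_G = r_{w\overline{w'}} = r_w\, r_{\overline{w'}} = r_w\, r_{w'}^{-1}$ (using $\overline{w'} \in B$ and $r_{w'}\,r_{\overline{w'}} = r_{w'\overline{w'}} = 1_G$), while $\bar\phi$ is injective since $w \in B$ with $r_w = 1_G$ gives $q_w = q_0 \in F$ and $r_w = 1_G$, so $w \in \WP(H)$ and $h(w) = 1_H$. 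Thus $K$ is a finite-index subgroup of $H$ embedding in $G$, as required.
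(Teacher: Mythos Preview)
The paper does not contain a proof of this theorem; it is quoted in the introduction as a known background result and attributed to Kambites~\cite{MR2259632}, so there is no argument in the present paper to compare your proposal against.

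On the merits of your proposal itself: the ``if'' direction is a clean and correct Schreier-coset construction, and the telescoping computation of the register value is right (and valid for non-abelian $G$). In the ``only if'' direction, everything you write \emph{after} granting the normalisation is correct and efficiently argued. However, as you yourself flag, the normalisation step---arranging that every word representing $1_H$ returns the automaton to $q_0$, equivalently that one may take $F=\{q_0\}$---is the substantive content of this direction, and you have left it as a sketch. Your suggested route via a Myhill--Nerode-style minimisation for $G$-automata is the natural one, but carrying it out is not routine: one must quotient by an equivalence on \emph{configurations} $(q,r)\in Q\times G$ rather than on bare states, show that for each equivalence class of states the required register shifts can be chosen coherently, and verify compatibility with both the transition function and the register-update function when $G$ is possibly non-abelian. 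Until that lemma is actually proved, the ``only if'' direction remains incomplete, and it is precisely here that the real work of the theorem lies.
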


For non-deterministic \(G\)-automata, several results are known for specific types of groups.
For a free group \(F\) of rank \(\geq 2\),
it is known that a language is accepted by an \(F\)-automaton if and only if
it is context-free~\citelist{\cite{MR0152391}*{\textsc{Proposition} 2} \cite{MR2151422}*{Corollary 4.5} \cite{MR2482816}*{Theorem 7}}.
Combining with the Muller--Schupp theorem, the class of groups whose word problems are accepted by \(F\)-automata
is the class of virtually free groups.
The class of groups whose word problems are accepted by \((F \times F)\)-automata is exactly the class of
recursively presentable groups~\citelist{\cite{MR2151422}*{Corollary 3.5} \cite{MR2482816}*{Theorem 8} \cite{MR1807922}*{Theorem 10}}.
For the case where \(G\) is (virtually) abelian, the following result was shown by Elder, Kambites, and Ostheimer.
\begin{thm}[Elder, Kambites, and Ostheimer~\cite{MR2483126}, 2008]\label[thm]{theorem-EKO}
	\hfill
	\begin{enumerate}
		\item\label{item-Zn} Let \(H\) be a finitely generated group and \(n\) be a positive integer.
			Then the word problem of \(H\) is accepted by a \(\Z^n\)-automaton
			if and only if \(H\) is virtually free abelian of rank at most \(n\) \cite{MR2483126}*{Theorem 1}.
		\item\label{item-G-virtually-abelian} Let \(G\) be a virtually abelian group and \(H\) be a finitely generated group.
			Then the word problem of \(H\) is accepted by a \(G\)-automaton if and only if
			\(H\) has a finite index subgroup which embeds in \(G\) \cite{MR2483126}*{Theorem 4}.
	\end{enumerate}
\end{thm}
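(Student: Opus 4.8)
\emph{A proof plan.} The plan is to prove the two ``if'' directions at once and cheaply, to reduce part~\ref{item-G-virtually-abelian} to part~\ref{item-Zn}, and then to extract directly from a $\Z^n$-automaton for $\WP(H)$ a group homomorphism from a subgroup of $\Z^n$ onto a finite index subgroup of $H$.

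First I would dispose of the ``if'' directions: a deterministic $G$-automaton is in particular a ($G$-)automaton, so if $H$ has a finite index subgroup which embeds in $G$ then $\WP(H)$ is accepted by a $G$-automaton by \Cref{theorem-deterministic}, and the same gives the ``if'' direction of~\ref{item-Zn} since a virtually free abelian group of rank at most $n$ has a finite index subgroup isomorphic to some $\Z^m$ with $\Z^m \hookrightarrow \Z^n$. Next I would reduce~\ref{item-G-virtually-abelian} to~\ref{item-Zn}: the finitely many register values occurring on the transitions of a $G$-automaton generate a finitely generated virtually abelian subgroup $G' \le G$, which has a finite index \emph{normal} subgroup $A \cong \Z^{r}$; keeping track of the (finite) coset in $G'/A$ inside the state set turns the $G'$-automaton into a $\Z^{r}$-automaton accepting the same language. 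Since $A \le G' \le G$, any finite index subgroup of $H$ that embeds in $\Z^{r}$ also embeds in $G$, so it suffices to prove: if $\WP(H)$ is accepted by a $\Z^n$-automaton, then $H$ has a finite index subgroup which is free abelian of rank at most $n$.

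So fix a $\Z^n$-automaton $\mathcal{A}$ with $L(\mathcal{A}) = \WP(H)$. After a harmless modification (add one new state joined to the old initial and terminal states by transitions of register-weight $0$) I may assume $\mathcal{A}$ has a single state $\star$ that is both its only initial and its only terminal state, and I may assume $\mathcal{A}$ trim. For a path $\pi$ write $w(\pi) \in H$ for the group element it spells and $\gamma(\pi) \in \Z^n$ for its total register weight. Put $D = \{\gamma(\rho) : \rho \text{ a loop at } \star\}$, a submonoid of $\Z^n$, and $E = D \cap (-D)$, which is a \emph{subgroup} of $\Z^n$. The key combinatorial observation is a cut-and-paste lemma: a loop $\rho$ at $\star$ with $\gamma(\rho) = 0$ spells a word of $\WP(H)$, hence $w(\rho) = 1_H$; consequently, for $g \in E$, concatenating a weight-$g$ loop with a weight-$(-g)$ loop forces \emph{all} loops at $\star$ of weight $g$ to have the same $H$-value, which I call $\psi(g)$. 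Concatenating loops then shows that $\psi\colon E \to H$ is a group homomorphism with abelian image $H_0 := \psi(E)$; since $E \le \Z^n$, the group $H_0$ is finitely generated abelian and the rank of its torsion-free part is at most $\operatorname{rank}(E) \le n$.

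It remains to show $[H : H_0] < \infty$, and this is the step I expect to be hardest. One first checks, using that $\WP(H)$ is closed under $u \mapsto u u^{-1}$, that every $h \in H$ equals $w(\pi)$ for some path $\pi$ from $\star$ to some state; composing with a fixed return path to $\star$ then shows that $H$ is covered by finitely many translates of $W := \{w(\rho) : \rho \text{ a loop at } \star\}$, so it is enough to prove that $W$ meets only finitely many cosets of $H_0$. This is the genuine obstacle: because $\mathcal{A}$ is \emph{nondeterministic}, acceptance of a word does not pin down a run, so one cannot argue by pumping a single accepting path; instead one must analyse the semilinear structure of the loop-weight set $D$ (write it as a finite union of linear sets and realise the period vectors by pumpable sub-loops), and feed this back into the cut-and-paste lemma to see that the $H$-value of a loop at $\star$ is determined modulo $H_0$ by bounded data. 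Granting this, $H$ is a finite union of $H_0$-cosets, so $[H : H_0] < \infty$; the torsion-free part of $H_0$ is then a free abelian subgroup of $H$ of rank at most $n$ and of finite index, which is exactly the conclusion of~\ref{item-Zn}, and via the reduction above that of~\ref{item-G-virtually-abelian}. The remaining ingredients — the reduction to $\Z^n$, the construction of $\psi$, and the concluding bookkeeping — are routine; the essential content is the cut-and-paste lemma together with the semilinear analysis forcing finite index.
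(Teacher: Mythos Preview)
Your overall architecture coincides with the paper's: dispose of the ``if'' directions via \Cref{theorem-deterministic}, reduce to an abelian register group, use a cut-and-paste observation (weight-$0$ loops spell $1_H$; this is exactly the paper's \Cref{lem-well-def}) to manufacture a well-defined homomorphism from a subgroup of $G$ into $H$, and then argue that the image has finite index. So the skeleton is right, and your identification of the finite-index step as the crux is accurate.

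Where you diverge from the paper is precisely at that crux, and your sketch has a gap you have not closed. You work only with loops at the single state~$\star$ and propose to exploit the semilinear structure of their weight set $D$; but the period vectors in any semilinear decomposition of $D$ are realised by pumpable cycles that in general sit at states $q\neq\star$, so their weights need not lie in $D$, let alone in $E=D\cap(-D)$, and your $\psi$ is simply undefined on them. Thus the sentence ``the $H$-value of a loop at $\star$ is determined modulo $H_0$ by bounded data'' is exactly the statement still to be proved, not a consequence of what precedes it. The paper circumvents this in two ways. First, it works at \emph{every} vertex $p$ and relative to every \emph{minimal accepting path} $\mu$ (finitely many, by Higman's lemma), defining monoids $M(\mu,p)$ of closed paths pumpable in $\mu$ and homomorphisms $G(\mu,p)\to H(\mu,p)$; this is the analogue of allowing your $\psi$ to live over arbitrary states. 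Second, instead of semilinear bookkeeping it runs $(v\bar v)^N$ through the automaton with $N$ exceeding the length of every minimal accepting path, uses pigeonhole to trap an entire copy of $v$ inside one pumpable loop $\alpha_j$, shrinks the surrounding segments to length $<\lvert V\rvert$ (\Cref{lem-shrink}), and concludes that $H$ is a finite union of double cosets $h_1^{-1}H(\mu,p)h_2^{-1}$; B.~H.~Neumann's lemma then forces some $H(\mu,p)$ to have finite index. This replaces your unfinished semilinear analysis with a short, fully combinatorial argument.
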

\noindent However, their proof is somewhat indirect in the sense that it depends on a deep theorem by Gromov~\cite{MR623534},
which states that every finitely generated group with polynomial growth function is virtually nilpotent.
In fact, their proof proceeds as follows.
Let \(H\) be a group whose word problem is accepted by a \(\Z^n\)-automaton.
They first develop some techniques to compute several bounds for linear maps and semilinear sets.
Then a map from \(H\) to \(\Z^n\) with certain geometric conditions is constructed to prove that \(H\) has polynomial growth function.
By Gromov's theorem, \(H\) is virtually nilpotent.
Finally, they conclude that \(H\) is virtually abelian, using some theorems about nilpotent groups and semilinear sets.
Because of the indirectness of their proof,
the embedding in \cref{theorem-EKO} \ref{item-G-virtually-abelian} is obtained only \emph{a posteriori}
and hence has no relation with the combinatorial structure of the \(G\)-automaton.

To our knowledge, there are almost no attempts so far to obtain explicit algebraic connections between \(G\) and \(H\),
where \(H\) is a group that has a word problem accepted by a \(G\)-automaton.
The only exception is the result due to Holt, Owens, and Thomas~\cite{MR2470538}*{\textsc{Theorem} 4.2},
where they gave a combinatorial proof to a special case of \cref{theorem-EKO} \ref{item-Zn}, for the case where \(n = 1\).
(In fact, their theorem is slightly stronger than \cref{theorem-EKO} \ref{item-Zn} for \(n = 1\)
because it is for \emph{non-blind} one-counter automata.
See also \cite{MR2483126}*{Section 7}.)

In this paper, we give a new, elementary, and purely combinatorial proof to \cref{theorem-EKO}.
\begin{thm}\label[thm]{main-theorem}
	Let \(G\) be an abelian group and \(H\) be a finitely generated group.
	Suppose that the word problem of \(H\) is accepted by a \(G\)-automaton \(A\).
	Then one can define a finite collection of monoids \((M(\mu, p))_{\mu, p}\), as in {\normalfont\cref{def-M}}, such that:
	\begin{enumerate}
		\item each \(M(\mu, p)\) consists of closed paths in \(A\) with certain conditions,
		\item each \(M(\mu, p)\) induces a group homomorphism \(f_{\mu, p}\)
			from a subgroup \(G(\mu, p)\) of \(G\) onto a subgroup \(H(\mu, p)\) of \(H\), and
		\item at least one of \(H(\mu, p)\)'s is a finite index subgroup of \(H\).
	\end{enumerate}
\end{thm}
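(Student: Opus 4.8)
\emph{Set-up.} The plan is to work with closed paths in $A$, using commutativity of $G$ in two ways: to reorder loops inside a computation without changing its register value, and to invert products of loop labels coordinatewise. First I would pass to the trim part of $A$, so that every state lies on some accepting computation (this changes neither $\WP(H)$ nor the relevant closed paths), and replace $G$ by the finitely generated abelian subgroup generated by the elements of $G$ labelling edges of $A$. For a closed path $\gamma$ based at a state $p$, write $g(\gamma)\in G$ for the product of its $G$-labels and $\overline{\gamma}\in H$ for the image in $H$ of the word it reads; concatenation makes the closed paths at $p$ a monoid and $\gamma\mapsto(g(\gamma),\overline{\gamma})$ a monoid homomorphism into $G\times H$. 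Here $p$ is a state, $\mu$ records the register value $g(\alpha)=g(\beta)^{-1}$ of a chosen \emph{context} $\alpha\colon\pinit\to p$, $\beta\colon p\to\pter$ --- an accepting computation split at $p$ --- and $M(\mu,p)$ is the monoid of closed paths at $p$ compatible with such a context as in \cref{def-M}; there are finitely many $\mu$.

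\emph{Parts (1) and (2).} The crux is the lemma: \emph{a closed path $\gamma$ at $p$ with $g(\gamma)=1_G$ has $\overline{\gamma}=1_H$}. Indeed, given a context $(\alpha,\beta)$ at $p$, the computation $\alpha\gamma\beta$ has register value $g(\alpha)g(\gamma)g(\beta)=g(\alpha)g(\beta)=1_G$ by commutativity, so it is accepting, hence reads a word of $\WP(H)$ and $\overline{\alpha}\,\overline{\gamma}\,\overline{\beta}=1_H$; cancelling against $\overline{\alpha}\,\overline{\beta}=1_H$ (from $\alpha\beta$) gives $\overline{\gamma}=1_H$. Granting this, the subgroup of invertible elements of the image of $M(\mu,p)$ in $G\times H$ projects injectively to $G$ --- its kernel there is a set of pairs $(1_G,\overline{\gamma})$, killed by the lemma --- so it is the graph of a homomorphism $f_{\mu,p}$ from a subgroup $G(\mu,p)\le G$ onto its image $H(\mu,p)\le H$, with $f_{\mu,p}(g(\gamma))=\overline{\gamma}$. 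In particular $H(\mu,p)$ is abelian, being a quotient of $G(\mu,p)$, which is what will let \cref{main-theorem} recover \cref{theorem-EKO}.

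\emph{Part (3), the main obstacle.} Fix $w\in\WP(H)$ with $\card{w}$ large compared with the number of states of $A$, an accepting computation $\rho$, and a state $p^\ast$ that $\rho$ visits many times; split $\rho=\rho_0\Gamma_1\cdots\Gamma_k\rho_1$ at the visits to $p^\ast$, so each $\Gamma_i$ is a closed path at $p^\ast$. Commutativity is the leverage: for any word in the $\Gamma_i$ (with repetitions) whose $G$-labels multiply to $g(\Gamma_1)\cdots g(\Gamma_k)$, the path $\rho_0(\cdots)\rho_1$ is again an accepting computation and so reads a word of $\WP(H)$. Adjacent transpositions force the values $\overline{\Gamma_i}$ to commute pairwise, and the remaining balanced rearrangements force $g(\Gamma_i)\mapsto\overline{\Gamma_i}$ to be consistent and realised within a single $M(\mu,p)$, so that $\overline{\Gamma_1},\dots,\overline{\Gamma_k}\in H(\mu,p)$ for that pair. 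What remains --- and where I expect the real difficulty --- is to pump enough: to choose $w$ (and, if needed, a bounded family of words) so that the $\overline{\Gamma_i}$ generate a finite-index subgroup of $H$ while keeping $(\mu,p)$ fixed. This should come from the flexibility that accepting $\WP(H)$ forces on $A$: closure of $\WP(H)$ under cyclic rotation, squaring, and insertion of cancelling pairs $ss^{-1}$ forces long computations to contain short, pumpable first-return loops that read controlled pieces of $w$ --- in particular realising each generator of $H$, or a bounded power of it --- and a pigeonhole over the finitely many pairs $(\mu,p)$ then isolates one with $[H:H(\mu,p)]<\infty$. For such a pair, $H(\mu,p)$ is abelian by part (2), so $H$ is virtually abelian and $f_{\mu,p}$ is the asserted surjection from a subgroup of $G$ onto a finite-index subgroup of $H$, recovering \cref{theorem-EKO}.
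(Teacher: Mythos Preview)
Your reading of the index \(\mu\) is off: in \cref{def-M}, \(\mu\) is a \emph{minimal accepting path} (minimal under the scattered-subword order), not a register value of a context. This matters for finiteness: under your interpretation there is no reason that only finitely many context values \(g(\alpha)\) occur at a given \(p\) (already for \(G=\Z\) there can be infinitely many), whereas the paper gets finiteness of the \(\mu\)'s from Higman's lemma. Your key lemma for parts (1)--(2) --- a promising closed path with trivial \(G\)-label has trivial \(H\)-image --- is exactly the paper's \cref{lem-well-def}, and from there the construction of \(f_{\mu,p}\) is the same, so those parts are fine in spirit once \(\mu\) is corrected.

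The real gap is part (3). Your plan takes one long \(w\in\WP(H)\), splits an accepting run at a frequently visited state into loops \(\Gamma_i\), and hopes that rearrangement identities plus ``pumping'' force the \(\overline{\Gamma_i}\) to generate a finite-index subgroup. You correctly flag this as the hard step, but the sketch does not supply the missing idea, and there is no mechanism in it that ties an \emph{arbitrary} \(h\in H\) to any particular \(H(\mu,p)\). The paper's argument is quite different and much more targeted: for \emph{each} \(h\), pick \(v\) with \(\rho(v)=h\), run \((v\bar v)^N\) through \(A\) with \(N\) larger than the length of every minimal accepting path, and decompose the resulting accepting path both as \(\omega_1\bar\omega_1\cdots\omega_N\bar\omega_N\) and as \(\alpha_0 e_1 \alpha_1\cdots e_n \alpha_n\) along some dominated minimal \(\mu\). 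Pigeonhole on \(N>n\) forces some \(\omega_i\) to sit entirely inside one \(\alpha_j\in M(\mu,\source(\alpha_j))\); a shrinking lemma (using downward closure of \(M(\mu,p)\) under \(\scsubword\)) replaces the surrounding pieces by paths of length \(<\card V\). This exhibits \(h\) in a double coset \(h_1^{-1}H(\mu,p)h_2^{-1}\) with \(h_1,h_2\) ranging over the finite set \(\rho(\Sigma^{<\card V})\). Since \(H\) is then a finite union of such double cosets, B.~H.~Neumann's lemma gives that some \(H(\mu,p)\) has finite index. Your pumping/pigeonhole over a single long word does not produce this per-\(h\) coset containment, and without it there is no route to finite index.
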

\noindent For the implication from \cref{main-theorem} to \cref{theorem-EKO}, see \Cref{subsection-G-automata}.

Note that the direction of the group homomorphisms \(f_{\mu, p}\) in \cref{main-theorem} is opposite to
the embeddings in \cref{theorem-deterministic} and \cref{theorem-EKO} \ref{item-G-virtually-abelian}.
This direction seems more natural for the non-deterministic case;
this observation suggests the following question.

\begin{qu}\label[qu]{qu-hom}
	Let \(G\) and \(H\) be groups with \(H\) finitely generated.
	Suppose that the word problem of \(H\) is accepted by a \(G\)-automaton.
	Does there exist a group homomorphism from a subgroup of \(G\) onto a finite index subgroup of \(H\)? 
	If so, is it obtained combinatorially?
\end{qu}

Our \cref{main-theorem} is the very first step for approaching \cref{qu-hom}.
Note that an affirmative answer to \cref{qu-hom} would generalize \cref{theorem-deterministic}.

\section{Preliminaries}

\subsection{Words, subwords, and scattered subwords}

For a set \(\Sigma\), we write \(\Sigma^*\) for the free monoid generated by \(\Sigma\),
i.e., the set of \emph{words} over \(\Sigma\).
For a word \(u = a_1 a_2 \dotsm a_n \in \Sigma^*\) (\(n \geq 0, a_i \in \Sigma\)),
the number \(n\) is called the \emph{length} of \(u\), which is denoted by \(\len{u}\).
For two words \(u, v \in \Sigma^*\), the \emph{concatenation} of \(u\) and \(v\) are denoted by \(u \cdot v\), or simply \(u v\).
The identity element of \(\Sigma^*\) is the \emph{empty word}, denoted by \(\varepsilon\), which is the unique word of length zero.
For an integer \(n \geq 0\), the \(n\)-fold concatenation of a word \(u \in \Sigma^*\) is denoted by \(u^n\).
For an integer \(n > 0\), we write \(\Sigma^{< n}\) for the set of words of length less than \(n\).

A word \(u \in \Sigma^*\) is a \emph{subword} of a word \(v \in \Sigma^*\), denoted by \(u \subword v\),
if there exist two words \(u_1, u_2 \in \Sigma^*\) such that \(u_1 u u_2 = v\).
A word \(u \in \Sigma^*\) is a \emph{scattered subword} of a word \(v \in \Sigma^*\), denoted by \(u \scsubword v\),
if there exist two finite sequences of words \(u_1, u_2, \dotsc, u_n \in \Sigma^*\) (\(n \geq 0\))
and \(v_0, v_1, \dotsc, v_n \in \Sigma^*\) such that \(u = u_1 u_2 \dotsm u_n\) and \(v = v_0 u_1 v_1 u_2 v_2 \dotsm u_n v_n\).
That is, \(v\) is obtained from \(u\) by inserting some words.
Note that the two binary relations \(\subword\) and \(\scsubword\) are both partial orders on \(\Sigma^*\).

\subsection{Word problem for groups}\label{subsection-word-problem}

Let \(H\) be a finitely generated group.
A \emph{choice of generators} for \(H\) is a surjective monoid homomorphism \(\rho\)
from the free monoid \(\Sigma^*\) on a finite alphabet \(\Sigma\) onto \(H\).
The \emph{word problem} of \(H\) with respect to \(\rho\), denoted by \(\WP_\rho(H)\),
is the set of words in \(\Sigma^*\) mapped to the identity element \(1_H\) of \(H\) via \(\rho\),
i.e., \(\WP_\rho(H) = \rho^{-1}(1_H)\).

Although the word problem \(\WP_\rho(H)\) depends on the choice of generators \(\rho\), this does not cause problems:
\begin{lem}[e.g., \cite{MR2132375}*{Lemma 1}]
	Let \(\mathcal{C}\) be a class of languages closed under inverse homomorphisms and let \(H\) be a finitely generated group.
	Then \(\WP_\rho(H) \in \mathcal{C}\) for \emph{some} choice of generators \(\rho\)
	if and only if \(\WP_\rho(H) \in \mathcal{C}\) for \emph{any} choice of generators \(\rho\).
	\qed
\end{lem}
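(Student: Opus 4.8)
The plan is to reduce the statement to the closure hypothesis on \(\mathcal{C}\) by exhibiting, between any two choices of generators, a morphism of free monoids that pulls one word problem back to the other. One implication is immediate: since \(H\) is finitely generated it admits at least one choice of generators, so ``\(\WP_\rho(H) \in \mathcal{C}\) for any \(\rho\)'' formally implies ``\(\WP_\rho(H) \in \mathcal{C}\) for some \(\rho\)''. It therefore remains to prove the converse.

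Suppose \(\WP_\rho(H) \in \mathcal{C}\) for a particular choice of generators \(\rho \colon \Sigma^* \to H\), and let \(\rho' \colon \Delta^* \to H\) be an arbitrary other choice, with \(\Sigma\) and \(\Delta\) finite. The key step is to construct a monoid homomorphism \(\phi \colon \Delta^* \to \Sigma^*\) satisfying \(\rho \circ \phi = \rho'\). Because \(\rho\) is surjective, for each letter \(b \in \Delta\) I can pick a word \(w_b \in \Sigma^*\) with \(\rho(w_b) = \rho'(b)\); here finiteness of \(\Delta\) guarantees that only finitely many such choices are needed. Setting \(\phi(b) := w_b\) and extending multiplicatively determines \(\phi\) on all of \(\Delta^*\), and since \(\rho \circ \phi\) and \(\rho'\) are monoid homomorphisms agreeing on the generating set \(\Delta\), they coincide.

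With \(\phi\) in hand, the identity \(\WP_{\rho'}(H) = (\rho')^{-1}(1_H) = (\rho \circ \phi)^{-1}(1_H) = \phi^{-1}\bigl(\rho^{-1}(1_H)\bigr) = \phi^{-1}\bigl(\WP_\rho(H)\bigr)\) follows by a direct unwinding of definitions. Thus \(\WP_{\rho'}(H)\) is the inverse image of \(\WP_\rho(H) \in \mathcal{C}\) under the morphism \(\phi\), and closure of \(\mathcal{C}\) under inverse homomorphisms yields \(\WP_{\rho'}(H) \in \mathcal{C}\), as required.

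There is no real obstacle here; the only point meriting care is that the surjectivity of \(\rho\) (not merely of \(\rho'\)) is exactly what allows the lift \(\phi\) to exist, so the construction is genuinely asymmetric in the two choices of generators. Everything else is a formal verification, and no hypothesis on \(\mathcal{C}\) beyond closure under inverse morphisms is used.
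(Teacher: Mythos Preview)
Your argument is correct and is the standard proof of this well-known fact. Note that the paper does not actually supply a proof of this lemma: it is stated with a citation and a \qed, so there is no ``paper's own proof'' to compare against; your construction of \(\phi\) via lifting each letter of \(\Delta\) through the surjection \(\rho\) and the identification \(\WP_{\rho'}(H) = \phi^{-1}(\WP_\rho(H))\) is exactly the expected justification.
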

\noindent This is the reason why we use ``\emph{the} word problem of \(H\)'' rather than ``\emph{a} word problem of \(H\).''

\subsection{Graphs and paths}

A \emph{graph} is a \(4\)-tuple \((V, E, \source, \target)\), where \(V\) is the set of vertices, \(E\) is the set of (directed) edges,
\(\source\colon E \to V\) and \(\target\colon E \to V\) are functions
assigning to every edge \(e \in E\) the \emph{source} \(\source(e) \in V\) and the \emph{target} \(\target(e) \in V\), respectively.
A graph is \emph{finite} if it has only finitely many vertices and edges.

A \emph{path} (of \emph{length} \(n\)) in a graph \(\Gamma = (V, E, \source, \target)\) is a word
\(e_1 e_2 \dotsm e_n \in E^*\) (\(n \geq 0\)) of edges \(e_i \in E\)
such that \(\target(e_i) = \source(e_{i + 1})\) for \(i = 1, 2, \dotsc, n - 1\).
We usually use Greek letters to denote paths in a graph.
For a non-empty path \(\omega = e_1 e_2 \dotsm e_n \in E^*\), the source and the target of \(\omega\) are defined as
\(\source(\omega) = \source(e_1)\) and \(\target(\omega) = \target(e_n)\), respectively.
If \(\omega = e_1 e_2 \dotsm e_n\) and \(\omega' = e_1' e_2' \dotsm e_k'\) are non-empty paths
such that \(\target(\omega) = \source(\omega')\), or at least one of \(\omega\) and \(\omega'\) is empty,
then the concatenation of \(\omega\) and \(\omega'\), denoted by \(\omega \cdot \omega'\) or \(\omega \omega'\),
is the path \(e_1 e_2 \dotsm e_n e_1' e_2' \dotsm e_k'\) of length \(n + k\), i.e., the concatenation as words.
A path \(\omega\) in \(\Gamma\) is \emph{closed} if \(\source(\omega) = \target(\omega)\), or \(\omega = \varepsilon\).
For a closed path \(\sigma\) and an integer \(n \geq 0\), we write \(\sigma^n\) for the \(n\)-fold concatenation of \(\sigma\).

For a graph \(\Gamma = (V, E, \source, \target)\), an \emph{edge-labeling function} is a function \(\Label\) from \(E\) to a set \(M\).
If \(M\) is a monoid and \(\omega = e_1 e_2 \dotsm e_n\) is a path in \(\Gamma\),
then the label of \(\omega\) is defined as \(\Label(\omega) = \Label(e_1) \Label(e_2) \dotsm \Label(e_n)\)
via the multiplication of \(M\).

\subsection{\texorpdfstring{\(G\)}{G}-automata}\label{subsection-G-automata}

For a group \(G\), a (non-deterministic) \emph{\(G\)-automaton} over a finite alphabet \(\Sigma\)
is defined as a \(5\)-tuple \((\Gamma, \Label_G, \Label_\Sigma, \pinit, \pter)\), where
\(\Gamma = (V, E, \source, \target)\) is a finite graph,
\(\Label_G\colon E \to G\) and \(\Label_\Sigma\colon E \to \Sigma^*\) are edge-labeling functions,
\(\pinit \in V\) is the \emph{initial vertex}, and
\(\pter \in V\) is the \emph{terminal vertex}.
For simplicity, we assume that \(\Label_\Sigma(e) \in \Sigma \cup \{\varepsilon\}\) for each \(e \in E\).
(Note that this assumption does not decrease the accepting power of \(G\)-automata.
Indeed, if necessary, one can subdivide an edge \(e\) with labels \(\Label_\Sigma(e) = u v, \Label_G(e) = g\)
into two new edges \(e_1, e_2\) with labels \(\Label_\Sigma(e_1) = u, \Label_G(e_1) = g\)
and \(\Label_\Sigma(e_2) = v, \Label_G(e_2) = 1_G\).)
An \emph{accepting path} in a \(G\)-automaton \(A = (\Gamma, \Label_G, \Label_\Sigma, \pinit, \pter)\)
is a path \(\alpha\) in \(\Gamma\) such that \(\source(\alpha) = \pinit\), \(\target(\alpha) = \pter\), and \(\Label_G(\alpha) = 1_G\)
(we consider that the empty path \(\varepsilon \in E^*\) is accepting if and only if \(\pinit = \pter\)).
We say that a path \(\omega\) in \(\Gamma\) is \emph{promising} if \(\omega\) is a subword of some accepting path in \(A\),
i.e., there exist two paths \(\omega_1, \omega_2 \in E^*\)
such that the concatenation \(\omega_1 \omega \omega_2 \in E^*\) is an accepting path in \(A\).
The \emph{language accepted} by a \(G\)-automaton \(A\), denoted by \(L(A)\),
is the set of all words \(u \in \Sigma^*\) such that \(u\) is the label of some accepting path in \(A\),
i.e., \(L(A) = \{\, \Label_\Sigma(\alpha) \in \Sigma^* \mid \text{\(\alpha\) is an accepting path in \(A\)} \,\}\).

\begin{prop}[e.g., \cite{MR2482816}*{Proposition 2}]
	For a group \(G\), the class of languages accepted by \(G\)-automata are closed under inverse homomorphisms.
	\qed
\end{prop}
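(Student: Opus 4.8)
The plan is to prove the closure by an explicit simulation construction, exactly as one does for ordinary nondeterministic finite automata or pushdown automata, but carrying the \(G\)-register along unchanged. Fix a group \(G\), a \(G\)-automaton \(A = (\Gamma, \Label_G, \Label_\Delta, \pinit, \pter)\) over a finite alphabet \(\Delta\) with \(\Gamma = (V, E, \source, \target)\), and a monoid homomorphism \(\phi\colon \Sigma^* \to \Delta^*\); I want to build a \(G\)-automaton \(B\) over \(\Sigma\) with \(L(B) = \phi^{-1}(L(A))\). Since a monoid homomorphism out of a free monoid is determined by the images \(\phi(a) \in \Delta^*\) of the finitely many letters \(a \in \Sigma\), and each such image is a fixed word \(\phi(a) = b_1 \dotsm b_{k_a}\), the idea is that \(B\) should read a single letter \(a\) and, in doing so, simulate \(A\) reading the whole block \(\phi(a)\), letting the \(G\)-register evolve exactly as it would in \(A\).

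Concretely, I would take the vertex set of \(B\) to be \(V\) together with, for each \(a \in \Sigma\), a family of auxiliary ``gadget'' vertices \([v, j, a]\) with \(v \in V\) and \(0 \le j \le k_a\), the index \(j\) recording how much of the block \(\phi(a)\) has been consumed. The edges of \(B\) fall into a few types: an \emph{entry} edge \(v \to [v,0,a]\) with \(\Sigma\)-label \(a\) and \(G\)-label \(1_G\) (this is the unique place where the input letter \(a\) is emitted); \emph{letter} edges \([u,j,a] \to [w,j+1,a]\) copied from each \(A\)-edge \(e\) from \(u\) to \(w\) with \(\Label_\Delta(e) = b_{j+1}\), carrying \(\Sigma\)-label \(\varepsilon\) and \(G\)-label \(\Label_G(e)\); \emph{\(\varepsilon\)-simulation} edges \([u,j,a] \to [w,j,a]\) copied from each \(\varepsilon\)-edge of \(A\); and an \emph{exit} edge \([v,k_a,a] \to v\) once the block is finished. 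The initial and terminal vertices of \(B\) are \(\pinit\) and \(\pter\). By construction \(B\) is finite, and I would invoke the paper's subdivision remark to stay within the convention \(\Label_\Sigma(e) \in \Sigma \cup \{\varepsilon\}\).

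The heart of the argument is then a correspondence between accepting paths. I would show that any path in \(B\) from \(\pinit\) to \(\pter\) factors uniquely into blocks, one per input letter, where each block enters a gadget for some \(a\), traverses simulated \(A\)-edges advancing \(j\) from \(0\) to \(k_a\) (consuming exactly \(b_1, \dotsc, b_{k_a}\) in order, with arbitrary intervening \(\varepsilon\)-moves), and exits; splicing these simulated \(A\)-edges together yields, for \(w = a_1 \dotsm a_n\), an \(A\)-path from \(\pinit\) to \(\pter\) reading \(\phi(a_1) \dotsm \phi(a_n) = \phi(w)\) whose total \(\Label_G\) equals that of the \(B\)-path. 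Conversely, any \(A\)-path reading \(\phi(w)\) can be cut at the block boundaries and lifted to such a \(B\)-path. Since \(\Label_G\) is preserved step for step, the \(B\)-path is accepting (ends at \(\pter\), register \(1_G\)) if and only if the corresponding \(A\)-path is; hence \(w \in L(B)\) exactly when \(\phi(w) \in L(A)\), i.e.\ \(L(B) = \phi^{-1}(L(A))\).

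The step I expect to need the most care is the treatment of the \(\varepsilon\)-transitions of \(A\), and in particular the degenerate cases \(\phi(a) = \varepsilon\) and the empty input word \(w = \varepsilon\). When \(\phi(a) = \varepsilon\) the gadget has no letter edges, so reading \(a\) may only fire \(\varepsilon\)-simulation edges, which is exactly right; but when \(w = \varepsilon\) the automaton \(B\) as described can make no move at the top level, whereas \(A\) might still accept \(\varepsilon\) through a nontrivial path of \(\varepsilon\)-edges. I would repair this by also adding top-level \(\varepsilon\)-simulation edges \(u \to w\) on \(V\) (with \(\Sigma\)-label \(\varepsilon\) and \(G\)-label \(\Label_G(e)\)) for every \(\varepsilon\)-edge \(e\) of \(A\), so that \(B\) can reproduce all of \(A\)'s \(\varepsilon\)-moves even when no input is consumed; checking that these extra edges do not disturb the block factorization for nonempty \(w\) is routine but is the place where the argument must be stated carefully.
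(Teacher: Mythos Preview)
The paper does not actually prove this proposition: it is stated with a citation and an immediate \qed, so there is no ``paper's own proof'' to compare against. Your proposal supplies the standard simulation argument, and it is correct. The gadget construction you describe---entering a block on reading \(a\), advancing a counter \(j\) through the letters of \(\phi(a)\) while copying \(A\)'s edges and their \(G\)-labels verbatim, and exiting when \(j = k_a\)---is exactly the usual proof that inverse-homomorphism closure for nondeterministic finite automata lifts to any automaton model in which an auxiliary storage is updated edge-by-edge and the acceptance condition depends only on the final storage content; the \(G\)-register is just carried along. Your identification of the one genuinely delicate point, namely that \(\varepsilon\)-moves of \(A\) outside any block (in particular the case \(w = \varepsilon\)) must be replicated at the top level of \(B\), is also on target, and your fix (duplicating \(A\)'s \(\varepsilon\)-edges onto the copy of \(V\) inside \(B\)) handles it cleanly without disturbing the correspondence for nonempty inputs.
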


Replacing the register group \(G\) by its finite index subgroup or finite index overgroup does not change the class of languages accepted by \(G\)-automata:
\begin{prop}[e.g., \cite{MR2483126}*{Proposition 8}]
	Let \(G\) be a group and \(H\) be a subgroup of \(G\).
	Then every language accepted by a \(H\)-automaton is accepted by a \(G\)-automaton.
	If \(H\) has finite index in \(G\), then the converse holds.
	\qed
\end{prop}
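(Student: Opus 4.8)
The statement has two parts, and the plan is to treat them separately.

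\emph{The inclusion direction.} Given a subgroup \(H \leq G\) and an \(H\)-automaton \(A = (\Gamma, \Label_H, \Label_\Sigma, \pinit, \pter)\), I would simply reinterpret \(A\) as a \(G\)-automaton by postcomposing the register-labeling \(\Label_H\colon E \to H\) with the inclusion \(\iota\colon H \hookrightarrow G\), leaving the graph, the \(\Sigma\)-labeling, and the distinguished vertices untouched. Since \(\iota\) is an injective monoid homomorphism with \(\iota(1_H) = 1_G\), every path \(\alpha\) satisfies \(\Label_G(\alpha) = \iota(\Label_H(\alpha))\), so \(\Label_G(\alpha) = 1_G\) exactly when \(\Label_H(\alpha) = 1_H\). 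Hence a path is accepting for the \(G\)-automaton if and only if it is accepting for the \(H\)-automaton, and the two automata accept literally the same language. This direction uses no finiteness hypothesis.

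\emph{The finite-index converse.} Now suppose \(n := [G : H] < \infty\) and \(L = L(A)\) for a \(G\)-automaton \(A = (\Gamma, \Label_G, \Label_\Sigma, \pinit, \pter)\) with \(\Gamma = (V, E, \source, \target)\). The plan is to simulate \(A\) by an \(H\)-automaton that keeps the coset of the register in its finite-state control and stores only the ``\(H\)-part'' of the register in its \(H\)-register. Concretely, I would fix a right transversal \(T = \{t_1, \dots, t_n\} \subseteq G\) with \(t_1 = 1_G\), so that \(G = \bigsqcup_{i} H t_i\); for \(g \in G\) write \(\overline{g} \in T\) for the representative of the right coset \(Hg\) and \(\{g\} := g\,\overline{g}^{-1} \in H\). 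The new automaton \(A'\) has vertex set \(V \times T\), initial vertex \((\pinit, t_1)\), and terminal vertex \((\pter, t_1)\). For each edge \(e \in E\) from \(p\) to \(q\) with \(\Label_\Sigma(e) = a\) and \(\Label_G(e) = g\), and each \(t \in T\), I would add an edge of \(A'\) from \((p, t)\) to \((q, \overline{t g})\) carrying \(\Sigma\)-label \(a\) and \(H\)-label \(\{t g\} = t g\,\overline{t g}^{\,-1} \in H\).

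The core of the argument is an invariant, proved by induction on length: there is a path in \(A'\) from \((\pinit, t_1)\) to \((q, t)\) with \(\Sigma\)-label \(u\) and register \(h \in H\) if and only if there is a path in \(A\) from \(\pinit\) to \(q\) with \(\Sigma\)-label \(u\) and register \(r = h\,t\). The base case is the empty path, where \(r = 1_H \cdot t_1 = 1_G\). For the inductive step, appending the \(A\)-edge \(e\) with \(\Label_G(e) = g\) to a path ending with register \(r = h t\) yields register \(r g = h\,(t g)\), and factoring \(t g = \{t g\}\,\overline{t g}\) shows the matching \(A'\)-edge leads to \((q, \overline{t g})\) with updated register \(h \cdot \{t g\}\), so that \(rg = (h\{tg\})\,\overline{tg}\) stays consistent. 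Because the target representative and the new label depend only on the current \(t\) and on \(\Label_G(e)\) — well defined since \(H t g\) depends only on the coset \(H t\) and on \(g\) — the paths of \(A'\) and of \(A\) correspond bijectively and preserve \(\Sigma\)-labels. Specializing to \(q = \pter\), \(t = t_1\), \(h = 1_H\) then gives \(L(A') = L(A) = L\): reaching \((\pter, t_1)\) with register \(1_H\) corresponds exactly to reaching \(\pter\) with register \(1_H \cdot t_1 = 1_G\). The only real subtlety — rather than a genuine difficulty — is choosing the side of the cosets to match the right-multiplicative update of the register, so that \(H t \mapsto H t g\) is well defined on cosets; once that is fixed, finiteness of \(A'\) (vertex set \(V \times T\), with \(\card{T}\) edges per edge of \(A\)), membership of each label \(\{t g\}\) in \(H\), and compatibility with the single-letter \(\Sigma\)-labeling assumption are all immediate.
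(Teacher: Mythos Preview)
Your argument is correct and is precisely the standard one: reinterpret an \(H\)-automaton inside \(G\) via the inclusion, and for the converse track the right coset of the register in the finite control via a transversal, storing the Schreier-type \(H\)-component in the register. The paper, however, does not give its own proof of this proposition at all; it is stated with a citation to \cite{MR2483126}*{Proposition 8} and closed with a bare \qed. So there is nothing in the paper to compare against beyond noting that your construction is the expected one and matches what the cited reference does.
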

\noindent Since the word problem of \(H\) is trivially accepted by an \(H\)-automaton for every finitely generated group \(H\),
we obtain the following corollary.
\begin{cor}\label[cor]{cor-implication}
	\cref{main-theorem} implies \cref{theorem-EKO}.
	\qed
\end{cor}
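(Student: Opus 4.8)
The plan is to obtain \cref{theorem-EKO} by combining \cref{main-theorem} with the two closure propositions recorded just above and with the structure theorem for finitely generated abelian groups. In each part of \cref{theorem-EKO} the ``if'' direction is elementary and does not use \cref{main-theorem}: if $H$ has a finite-index subgroup $K$ embedding in $G$, then $\WP(K)$ is accepted by a $K$-automaton (one state, one loop edge per generator, with register labels the images of the generators), hence, since $K$ embeds in $G$, by a $G$-automaton, and passing from the word problem of $K$ to that of the finite-index overgroup $H$ is a standard construction (see \cite{MR2483126}). So I would concentrate on the ``only if'' directions.

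For \cref{theorem-EKO} \ref{item-G-virtually-abelian} I would first reduce to the abelian case: choosing an abelian subgroup $G_0 \leq G$ of finite index, the proposition that replacing the register group by a finite-index subgroup preserves the accepted languages shows that $\WP(H)$ is already accepted by a $G_0$-automaton $A$. Applying \cref{main-theorem} to the abelian group $G_0$ and the automaton $A$, and selecting a pair of indices $(\mu, p)$ for which $H(\mu, p)$ has finite index in $H$, I obtain a subgroup $G(\mu, p) \leq G_0$ and a surjective homomorphism $f_{\mu, p} \colon G(\mu, p) \twoheadrightarrow H(\mu, p)$. Being a quotient of a subgroup of the abelian group $G_0$, the group $H(\mu, p)$ is abelian; being a finite-index subgroup of the finitely generated group $H$, it is finitely generated. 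Hence $H(\mu, p) \cong \Z^r \times F$ with $F$ finite. For \cref{theorem-EKO} \ref{item-Zn} one instead applies \cref{main-theorem} with $G = \Z^n$, and then $G(\mu, p) \leq \Z^n$ forces $G(\mu, p) \cong \Z^m$ with $m \leq n$, so the torsion-free rank $r$ of $H(\mu, p) \cong G(\mu, p)/\ker f_{\mu, p}$ satisfies $r \leq m \leq n$.

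The substantive step is to promote the surjection $f_{\mu, p}$ to the embedding demanded by \cref{theorem-EKO}. Let $\pi \colon H(\mu, p) \twoheadrightarrow \Z^r$ be the projection onto the free direct factor; then $\pi \circ f_{\mu, p} \colon G(\mu, p) \twoheadrightarrow \Z^r$ is a surjection onto a free abelian group, so it splits, and any splitting embeds $\Z^r$ into $G(\mu, p) \leq G_0 \leq G$. On the other hand, the free direct factor $\Z^r$ of $H(\mu, p) \cong \Z^r \times F$ has index $\card{F}$ in $H(\mu, p)$, hence finite index in $H$, and it is isomorphic to the copy of $\Z^r$ just embedded in $G$. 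Therefore $H$ has a finite-index subgroup that embeds in $G$, which is \cref{theorem-EKO} \ref{item-G-virtually-abelian}; in the $\Z^n$-case the same finite-index copy of $\Z^r$ with $r \leq n$ exhibits $H$ as virtually free abelian of rank at most $n$, which is \cref{theorem-EKO} \ref{item-Zn}.

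I do not expect a genuine obstacle here. The remaining points are routine group theory: a finite-index subgroup of a finitely generated group is finitely generated, cutting $H(\mu, p)$ down to its free direct factor $\Z^r$ keeps the index in $H$ finite, and a surjection onto a free abelian group splits. That last fact is the one load-bearing ingredient rather than bookkeeping, since it is exactly what reconciles the ``reversed'' direction of the homomorphism produced by \cref{main-theorem} with the embedding asserted in \cref{theorem-EKO}; everything else is an application of the two closure propositions.
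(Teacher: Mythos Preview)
Your argument is correct and is exactly the intended expansion of the paper's one-line justification (the corollary carries only a \qed, relying on the two closure propositions and the sentence ``the word problem of \(H\) is trivially accepted by an \(H\)-automaton''). The one substantive step the paper leaves implicit---turning the surjection \(G(\mu,p)\twoheadrightarrow H(\mu,p)\) into the required embedding by projecting onto the free part \(\Z^r\) and splitting---you have supplied correctly; note that for the ``if'' direction the paper's hint runs \(\WP(H)\to H\text{-automaton}\to K\text{-automaton}\to G\text{-automaton}\) rather than starting from \(\WP(K)\), but either route works.
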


\section{Proof of the main theorem}\label{section-proof}

Throughout this section, we fix an abelian group \(G\), a finitely generated group \(H\),
a choice of generators \(\rho\colon \Sigma^* \to H\),
and a \(G\)-automaton \(A = (\Gamma = (V, E, \source, \target), \Label_G, \Label_\Sigma, \pinit, \pter)\)
such that \(\WP_\rho(H) = L(A)\).
We write the group operation of \(G\) additively and \(0_G\) for the identity element of \(G\).

The following lemma is a starting point of our proof.
\begin{lem}\label[lem]{lem-well-def}
	Let \(\omega\) and \(\omega'\) be paths in \(\Gamma\)
	such that \(\source(\omega) = \source(\omega')\) and \(\target(\omega) = \target(\omega')\),
	and suppose that \(\omega\) is promising.
	Then \(\Label_G(\omega) = \Label_G(\omega')\) implies \(\rho(\Label_\Sigma(\omega)) = \rho(\Label_\Sigma(\omega'))\).
\end{lem}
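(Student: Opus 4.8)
The plan is to unfold the definition of \emph{promising} and then substitute \(\omega'\) for \(\omega\) inside an accepting path that witnesses the promise. Since \(\omega\) is promising, fix paths \(\omega_1, \omega_2 \in E^*\) such that \(\alpha := \omega_1 \omega \omega_2\) is an accepting path; thus \(\alpha\) runs from \(\pinit\) to \(\pter\) and \(\Label_G(\alpha) = 0_G\). Because \(\source(\omega') = \source(\omega)\) and \(\target(\omega') = \target(\omega)\), the word \(\alpha' := \omega_1 \omega' \omega_2 \in E^*\) is again a path, and it still runs from \(\pinit\) to \(\pter\) (here one only has to keep track of the empty-path conventions for concatenation so that \(\alpha'\) is genuinely a path with the correct source and target).

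Next I would verify that \(\alpha'\) is also accepting. Since \(\Label_G \colon E^* \to G\) is a monoid homomorphism and \(\Label_G(\omega) = \Label_G(\omega')\) by hypothesis,
\[
  \Label_G(\alpha') = \Label_G(\omega_1)\,\Label_G(\omega')\,\Label_G(\omega_2) = \Label_G(\omega_1)\,\Label_G(\omega)\,\Label_G(\omega_2) = \Label_G(\alpha) = 0_G,
\]
so \(\alpha'\) is accepting (note that this step does not even use commutativity of \(G\)). Hence both \(\Label_\Sigma(\alpha)\) and \(\Label_\Sigma(\alpha')\) lie in \(L(A) = \WP_\rho(H) = \rho^{-1}(1_H)\). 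Applying the monoid homomorphism \(\rho \circ \Label_\Sigma \colon E^* \to H\) to \(\alpha\) and to \(\alpha'\) therefore gives
\[
  \rho(\Label_\Sigma(\omega_1))\,\rho(\Label_\Sigma(\omega))\,\rho(\Label_\Sigma(\omega_2)) = 1_H = \rho(\Label_\Sigma(\omega_1))\,\rho(\Label_\Sigma(\omega'))\,\rho(\Label_\Sigma(\omega_2)).
\]
Since \(H\) is a group, the factors \(\rho(\Label_\Sigma(\omega_1))\) and \(\rho(\Label_\Sigma(\omega_2))\) are invertible, so cancelling them on the left and on the right yields \(\rho(\Label_\Sigma(\omega)) = \rho(\Label_\Sigma(\omega'))\), as desired.

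I do not expect a serious obstacle here: the whole argument is ``plug \(\omega'\) into an accepting path through \(\omega\) and cancel the surrounding context.'' The only points requiring a little care are the bookkeeping around the empty-path conventions and the remark that the accepting condition is preserved by the substitution because \(\Label_G\) sees \(\omega'\) only through \(\Label_G(\omega') = \Label_G(\omega)\).
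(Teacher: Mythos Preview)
Your proof is correct and follows exactly the same approach as the paper's own proof: extend \(\omega\) to an accepting path \(\omega_1\omega\omega_2\), observe that \(\omega_1\omega'\omega_2\) is again accepting because \(\Label_G(\omega)=\Label_G(\omega')\), and then cancel the surrounding factors in \(H\). Your parenthetical remark that commutativity of \(G\) is not actually needed here is accurate and is the only addition beyond what the paper writes.
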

\begin{proof}
	Since \(\omega\) is promising, there exist two paths \(\omega_1, \omega_2\) in \(\Gamma\)
	such that \(\omega_1 \omega \omega_2\) is an accepting path in \(A\).
	It follows from the assumption that \(\Label_G(\omega_1 \omega' \omega_2)
	= \Label_G(\omega_1) + \Label_G(\omega') + \Label_G(\omega_2)
	= \Label_G(\omega_1) + \Label_G(\omega) + \Label_G(\omega_2) = \Label_G(\omega_1 \omega \omega_2) = 0_G\),
	and \(\omega_1 \omega' \omega_2\) is also an accepting path in \(A\).
	That is, \(\Label_\Sigma(\omega_1 \omega \omega_2), \Label_\Sigma(\omega_1 \omega' \omega_2) \in \WP_\rho(H)\),
	and \(\rho(\Label_\Sigma(\omega_1)) \rho(\Label_\Sigma(\omega)) \rho(\Label_\Sigma(\omega_2)) = 1_H
	= \rho(\Label_\Sigma(\omega_1)) \rho(\Label_\Sigma(\omega')) \rho(\Label_\Sigma(\omega_2))\) in \(H\).
	Thus we have \(\rho(\Label_\Sigma(\omega)) = \rho(\Label_\Sigma(\omega'))\).
\end{proof}

\begin{defi}\label[defi]{def-minimal}
	An accepting path \(\alpha\) in \(A\) is \emph{minimal}
	if it is minimal with respect to the scattered subword relation \(\scsubword\) on \(E^*\).
	An accepting path \(\alpha\) in \(A\) \emph{dominates} a minimal accepting path \(\mu\) in \(A\) if \(\mu \scsubword \alpha\).
\end{defi}

A similar notion of minimal accepting paths can be found in \cite{https://doi.org/10.48550/arxiv.math/0606415}*{Section 4}.

\begin{rem}\label[rem]{rem-Higman}
	Note that, by Higman's lemma~\cite{MR49867}*{\textsc{Theorem} 4.4},
	the scattered subword relation \(\scsubword\) on \(\Sigma^*\) is a \emph{well-quasi-order}.
	In particular, there are only finitely many minimal accepting paths in \(A\),
	and every accepting path on \(A\) dominates some minimal accepting path in \(A\).
\end{rem}

\begin{defi}\label[defi]{def-pumpable}
	Let \(\mu = e_1 e_2 \dotsm e_n \in E^*\) (\(e_i \in E\)) be a minimal accepting path in \(A\).
	A closed path \(\sigma \in E^*\) in \(\Gamma\) is \emph{pumpable in \(\mu\)}
	if there exists an accepting path \(\alpha\) in \(A\) dominating \(\mu\) such that
	\(\alpha = \alpha_0 e_1 \alpha_1 e_2 \alpha_2 \dotsm e_n \alpha_n\)
	for some paths \(\alpha_0, \alpha_1, \dotsc, \alpha_n \in E^*\) in \(\Gamma\)
	and \(\sigma \subword \alpha_j\) for some \(j \in \{0, 1, \dotsc, n\}\).
\end{defi}

\begin{rems}
	\hfill
	\begin{enumerate}
		\item In \cref{def-pumpable}, each \(\alpha_i\) is a closed path in \(\Gamma\)
			and satisfies \(\Label_G(\alpha_0) + \Label_G(\alpha_1) + \dotsb + \Label_G(\alpha_n) = \Label_G(\alpha) = 0_G\)
			since \(\Label_G(\mu) = 0_G\) and \(G\) is abelian.
		\item Every closed path pumpable in a minimal accepting path \(\mu\) is promising.
	\end{enumerate}
\end{rems}

\begin{defi}\label[defi]{def-M}
	For a minimal accepting path \(\mu\) in \(A\) and a vertex \(p \in V\),
	define
	\[
		M(\mu, p) = \{\, \sigma \mid
		\text{\(\sigma\) is a closed path in \(\Gamma\) pumpable in \(\mu\)
		such that \(\source(\sigma) = p\), or \(\sigma = \varepsilon\)} \,\}.
	\]
\end{defi}

\begin{lem}\label[lem]{lem-monoid}
	Each \(M(\mu, p)\) is a monoid with respect to the concatenation operation, i.e.,
	\(\sigma_1, \sigma_2 \in M(\mu, p)\) implies \(\sigma_1 \sigma_2 \in M(\mu, p)\).
\end{lem}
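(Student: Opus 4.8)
The plan is to reduce the statement to closure under concatenation: the operation is associative since it is inherited from the free monoid \(E^*\), and \(\varepsilon \in M(\mu,p)\) by definition and is a two-sided identity, so it remains only to show that \(\sigma_1, \sigma_2 \in M(\mu,p)\) implies \(\sigma_1\sigma_2 \in M(\mu,p)\). If \(\sigma_1\) or \(\sigma_2\) equals \(\varepsilon\) this is immediate, so I assume both are non-empty; then each is a closed path with source \(p\), so \(\sigma_1\sigma_2\) is again a closed path with source \(p\), and the only thing left to verify is that \(\sigma_1\sigma_2\) is pumpable in \(\mu\).

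Write \(\mu = e_1 e_2 \cdots e_n\) and fix accepting paths \(\alpha = \alpha_0 e_1 \alpha_1 \cdots e_n \alpha_n\) and \(\beta = \beta_0 e_1 \beta_1 \cdots e_n \beta_n\) dominating \(\mu\) that witness the pumpability of \(\sigma_1\) and of \(\sigma_2\) respectively, say with \(\sigma_1 \subword \alpha_j\) and \(\sigma_2 \subword \beta_k\). For each \(i\), both \(\alpha_i\) and \(\beta_i\) are closed paths at the same vertex (at \(\pinit\) when \(i = 0\), at \(\target(e_i)\) when \(i \geq 1\)), so the gap-wise concatenation \(\gamma := (\alpha_0\beta_0)\, e_1\, (\alpha_1\beta_1)\, e_2 \cdots e_n\, (\alpha_n\beta_n)\) is a well-formed path from \(\pinit\) to \(\pter\). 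Since \(G\) is abelian and \(\Label_G(\mu) = 0_G\), one has \(\sum_i \Label_G(\alpha_i) = \Label_G(\alpha) = 0_G\) and likewise \(\sum_i \Label_G(\beta_i) = 0_G\), whence \(\Label_G(\gamma) = \sum_i \Label_G(\alpha_i) + \sum_i \Label_G(\beta_i) + \Label_G(\mu) = 0_G\); thus \(\gamma\) is accepting, and it plainly dominates \(\mu\).

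It then remains to relocate \(\sigma_2\). Writing \(\alpha_j = \pi_1 \sigma_1 \pi_2\) and \(\beta_k = \tau_1 \sigma_2 \tau_2\), I note that inside \(\gamma\) the distinguished occurrence of \(\sigma_1\) (in the \(\alpha_j\)-portion of the \(j\)-th gap) and that of \(\sigma_2\) (in the \(\beta_k\)-portion of the \(k\)-th gap) are disjoint, and both are flanked by the vertex \(p\) because \(\source(\sigma_1) = \target(\sigma_1) = \source(\sigma_2) = \target(\sigma_2) = p\). Let \(\gamma'\) be the path obtained from \(\gamma\) by deleting the block \(\sigma_2\) and reinserting it immediately after the block \(\sigma_1\). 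Since every splice point involved is the vertex \(p\), \(\gamma'\) is again a well-formed path from \(\pinit\) to \(\pter\); it traverses exactly the same edges as \(\gamma\) with the same multiplicities, so commutativity of \(G\) gives \(\Label_G(\gamma') = \Label_G(\gamma) = 0_G\), i.e.\ \(\gamma'\) is accepting; and since the edges \(e_1, \ldots, e_n\) were not moved, \(\gamma'\) still has a decomposition \(\gamma' = \gamma'_0 e_1 \gamma'_1 \cdots e_n \gamma'_n\) exhibiting \(\mu \scsubword \gamma'\), whose \(j\)-th gap \(\gamma'_j\) contains \(\sigma_1\sigma_2\) as a subword. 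Hence \(\sigma_1\sigma_2\) is pumpable in \(\mu\), so \(\sigma_1\sigma_2 \in M(\mu,p)\).

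The only genuine obstacle is that the naive idea --- inserting \(\sigma_2\) directly into the witness \(\alpha\) for \(\sigma_1\) --- fails, since \(\Label_G(\sigma_2)\) need not be \(0_G\). This is precisely what the two appeals to commutativity of \(G\) repair: first, merging \(\alpha\) and \(\beta\) gap-by-gap along \(\mu\) keeps the \(G\)-label zero (a computation of the same flavour as the one in the Remarks following \cref{def-pumpable}); second, sliding the closed block \(\sigma_2\) next to \(\sigma_1\) leaves the \(G\)-label unchanged. Everything else is routine bookkeeping on sources and targets of the constituent sub-paths, using that deleting or inserting a closed sub-path at a vertex preserves well-formedness of a path.
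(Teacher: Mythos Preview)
Your proof is correct and follows the same approach as the paper: merge the two witnessing accepting paths gap-by-gap along \(\mu\) to obtain an accepting path \(\gamma\), then relocate the closed block \(\sigma_2\) next to \(\sigma_1\), using the commutativity of \(G\) twice to verify that both \(\gamma\) and the rearranged \(\gamma'\) are accepting. Your write-up is in fact more explicit than the paper's about why each step preserves well-formedness and the \(G\)-label.
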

\begin{proof}
	Since both \(\sigma_1\) and \(\sigma_2\) are pumpable in \(\mu = e_1 e_2 \dotsm e_n \in E^*\) (\(e_i \in E\)),
	there exist two accepting paths \(\alpha = \alpha_0 e_1 \alpha_1 e_2 \alpha_2 \dotsm e_n \alpha_n\) (\(\alpha_i \in E^*\))
	and \(\beta = \beta_0 e_1 \beta_1 e_2 \beta_2 \dotsm e_n \beta_n\) (\(\beta_i \in E^*\))
	such that \(\sigma_1 \subword \alpha_i\) and \(\sigma_2 \subword \beta_j\) for some \(i, j \in \{0, 1, \dotsc, n\}\).
	Then we have \(\alpha_i = \alpha_i' \sigma_1 \alpha_i''\) for some \(\alpha_i', \alpha_i'' \in E^*\)
	and \(\beta_j = \beta_j' \sigma_2 \beta_j''\) for some \(\beta_j', \beta_j'' \in E^*\).
	We may assume that \(i \leq j\).
	Since \(G\) is abelian, the merged path \(\gamma = (\alpha_0 \beta_0) e_1 (\alpha_1 \beta_1) e_2 (\alpha_2 \beta_2) \dotsm e_n (\alpha_n \beta_n)\)
	and its permutation
	\begin{equation}
		\gamma' = (\alpha_0 \beta_0) e_1 (\alpha_1 \beta_1) e_2 (\alpha_2 \beta_2) \dotsm
		e_i (\alpha_i' \sigma_1 \sigma_2 \alpha_i'') e_{i + 1} \dotsm
		e_j (\beta_j' \beta_j'') e_{j + 1} \dotsm e_n (\alpha_n \beta_n) \label{eq-monoid}
	\end{equation}
	are accepting paths in \(A\) (\Cref{fig-monoid}).
	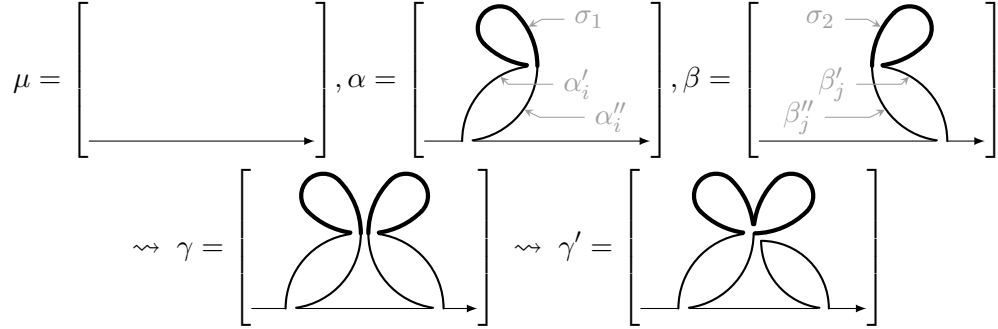
\begin{figure}
		\centering
		\begin{gather*}
			\mu = \mleft[
				\begin{tikzpicture}[scale = 1, baseline = (dummy.base)]
					\node (dummy) at (0, 0.8) {\phantom{x}};
					\draw[white, ultra thick, line cap = round] ({0 + cos(-135)}, {2 + sin(-135)})
						arc[start angle = -135, delta angle = -180, radius = {1 - 1 / sqrt(2)}];
					\draw[line cap = round, -latex] (-1.5, 0) -- (1.5, 0);
				\end{tikzpicture}
			\mright],
			\alpha = \mleft[
				\begin{tikzpicture}[scale = 1, baseline = (dummy.base)]
					\newcommand{\D}{0.15}
					\node (dummy) at (0, 0.8) {\phantom{x}};
					\draw[line cap = round] (-1.5, 0) -- (-1, 0);
					\draw[thick, line cap = round] (-1, 0) arc[start angle = 180, delta angle = -82, radius = 1];
					\draw[ultra thick, line cap = round] ({0 + cos(-135)}, {2 + sin(-135)}) arc[start angle = -135, delta angle = 82 - 45, radius = 1];
					\draw[ultra thick, line cap = round] ({0 + cos(-135)}, {2 + sin(-135)})
						arc[start angle = -135, delta angle = -180, radius = {1 - 1 / sqrt(2)}];
					\draw[ultra thick, line cap = round] ({-1 + cos(45)}, {1 + sin(45)}) arc[start angle = 45, delta angle = -45, radius = 1];
					\draw[thick, line cap = round] (0, 1) arc[start angle = 0, delta angle = -82, radius = 1];
					\draw[line cap = round, -latex] (-1 + \D, 0) -- (1.5, 0);
					\draw[gray!75!white, thin, stealth-] ({cos(120) + 0.01}, {sin(120) - 0.01}) -- ++(0.1, -0.1) -- ++(0.6, 0) node[right] {\(\alpha_i'\)};
					\draw[gray!75!white, thin, stealth-] ({-1 + cos(-35) + 0.01}, {1 + sin(-35) - 0.01}) -- ++(0.2, -0.1)
						-- ++(0.6, 0) node[right] {\(\alpha_i''\)};
					\draw[gray!75!white, thin, stealth-] ({-1 + cos(30) + 0.02}, {1 + sin(30) + 0.02}) -- ++({0.2 * cos(30)}, {0.2 * sin(30)})
						-- ++(0.3, 0) node[right] {\(\sigma_1\)};
				\end{tikzpicture}
			\mright],
			\beta = \mleft[
				\begin{tikzpicture}[scale = 1, baseline = (dummy.base)]
					\newcommand{\D}{0.15}
					\node (dummy) at (0, 0.8) {\phantom{x}};
					\draw[line cap = round] (-1.5, 0) -- (1 - \D, 0);
					\draw[thick, line cap = round] (0, 1) arc[start angle = 180, delta angle = 82, radius = 1];
					\draw[ultra thick, line cap = round] (0, 1) arc[start angle = 180, delta angle = -45, radius = 1];
					\draw[ultra thick, line cap = round] ({0 + cos(-45)}, {2 + sin(-45)})
						arc[start angle = -45, delta angle = 180, radius = {1 - 1 / sqrt(2)}];
					\draw[ultra thick, line cap = round] ({0 + cos(-45)}, {2 + sin(-45)}) arc[start angle = -45, delta angle = -82 + 45, radius = 1];
					\draw[thick, line cap = round] (1, 0) arc[start angle = 0, delta angle = 82, radius = 1];
					\draw[line cap = round, -latex] (1, 0) -- (1.5, 0);
					\draw[gray!75!white, thin, stealth-] ({cos(60) - 0.01}, {sin(60) - 0.01}) -- ++(-0.1, -0.1) -- ++(-0.6, 0) node[left] {\(\beta_j'\)};
					\draw[gray!75!white, thin, stealth-] ({1 + cos(180 + 35) - 0.01}, {1 + sin(180 + 35) - 0.01}) -- ++(-0.2, -0.1)
						-- ++(-0.6, 0) node[left] {\(\beta_j''\)};
					\draw[gray!75!white, thin, stealth-] ({1 + cos(180 - 30) - 0.02}, {1 + sin(180 - 30) + 0.02})
						-- ++({0.2 * cos(180 - 30)}, {0.2 * sin(180 - 30)}) -- ++(-0.3, 0) node[left] {\(\sigma_2\)};
				\end{tikzpicture}
			\mright]\\
			\leadsto \;
			\gamma = \mleft[
				\begin{tikzpicture}[scale = 1, baseline = (dummy.base)]
					\newcommand{\D}{0.05}
					\node (dummy) at (0, 0.8) {\phantom{x}};
					\draw[line cap = round] (-1.5, 0) -- (-1 - \D, 0);
					\begin{scope}[shift = {(-\D, 0)}]
						\draw[thick, line cap = round] (-1, 0) arc[start angle = 180, delta angle = -82, radius = 1];
						\draw[ultra thick, line cap = round] ({0 + cos(-135)}, {2 + sin(-135)}) arc[start angle = -135, delta angle = 82 - 45, radius = 1];
						\draw[ultra thick, line cap = round] ({0 + cos(-135)}, {2 + sin(-135)})
							arc[start angle = -135, delta angle = -180, radius = {1 - 1 / sqrt(2)}];
						\draw[ultra thick, line cap = round] ({-1 + cos(45)}, {1 + sin(45)}) arc[start angle = 45, delta angle = -45, radius = 1];
						\draw[thick, line cap = round] (0, 1) arc[start angle = 0, delta angle = -82, radius = 1];
					\end{scope}
					\draw[line cap = round] (-1 + 2 * \D, 0) -- (1 - 2 * \D, 0);
					\begin{scope}[shift = {(\D, 0)}]
						\draw[thick, line cap = round] (0, 1) arc[start angle = 180, delta angle = 82, radius = 1];
						\draw[ultra thick, line cap = round] (0, 1) arc[start angle = 180, delta angle = -45, radius = 1];
						\draw[ultra thick, line cap = round] ({0 + cos(-45)}, {2 + sin(-45)})
							arc[start angle = -45, delta angle = 180, radius = {1 - 1 / sqrt(2)}];
						\draw[ultra thick, line cap = round] ({0 + cos(-45)}, {2 + sin(-45)}) arc[start angle = -45, delta angle = -82 + 45, radius = 1];
						\draw[thick, line cap = round] (1, 0) arc[start angle = 0, delta angle = 82, radius = 1];
					\end{scope}
					\draw[line cap = round, -latex] (1 + \D, 0) -- (1.5, 0);
				\end{tikzpicture}
			\mright]
			\; \leadsto \;
			\gamma' = \mleft[
				\begin{tikzpicture}[scale = 1, baseline = (dummy.base)]
					\newcommand{\D}{0.1}
					\node (dummy) at (0, 0.8) {\phantom{x}};
					\draw[line cap = round] (-1.5, 0) -- (-1, 0);
					\draw[thick, line cap = round] (-1, 0) arc[start angle = 180, delta angle = -82, radius = 1];
					\draw[ultra thick, line cap = round] ({0 + cos(-135)}, {2 + sin(-135)}) arc[start angle = -135, delta angle = 82 - 45, radius = 1];
					\draw[ultra thick, line cap = round] ({0 + cos(-135)}, {2 + sin(-135)})
						arc[start angle = -135, delta angle = -180, radius = {1 - 1 / sqrt(2)}];
					\draw[ultra thick, line cap = round] ({-1 + cos(45)}, {1 + sin(45)}) arc[start angle = 45, delta angle = -38, radius = 1];
					\draw[ultra thick, line cap = round] ({1 + cos(135)}, {1 + sin(135)}) arc[start angle = 135, delta angle = 38, radius = 1];
					\draw[ultra thick, line cap = round] ({0 + cos(-45)}, {2 + sin(-45)})
						arc[start angle = -45, delta angle = 180, radius = {1 - 1 / sqrt(2)}];
					\draw[ultra thick, line cap = round] ({0 + cos(-45)}, {2 + sin(-45)}) arc[start angle = -45, delta angle = -44, radius = 1];
					\draw[thick, line cap = round] (0, 1) arc[start angle = 0, delta angle = -82, radius = 1];
					\draw[line cap = round] (-1 + 1.5 * \D, 0) -- (1 - 1.5 * \D, 0);
					\draw[thick, line cap = round] (0 + \D, 1 - \D) arc[start angle = 180, delta angle = 82, radius = 1 - \D];
					\draw[thick, line cap = round] (0 + \D, 1 - \D) arc[start angle = 90, delta angle = -90, radius = 1 - \D];
					\draw[line cap = round, -latex] (1, 0) -- (1.5, 0);
				\end{tikzpicture}
			\mright]
		\end{gather*}
		\caption{Construction of the accepting path \(\gamma'\) in \eqref{eq-monoid}}\label{fig-monoid}
	\end{figure}
\end{proof}

For each \(M(\mu, p)\), \cref{lem-monoid} allows us to define a surjective monoid homomorphism
\(f_{\mu, p}\colon M(\mu, p) \to \rho(\Label_\Sigma(M(\mu, p)))\) as the composition function \(\rho \circ \Label_\Sigma\).
By \cref{lem-well-def}, \(f_{\mu, p}\) induces a well-defined surjective monoid homomorphism
\(\bar{f}_{\mu, p}\colon \Label_G(M(\mu, p)) \to \rho(\Label_\Sigma(M(\mu, p)))\).
Let \(G(\mu, p)\) (resp.\ \(H(\mu, p)\)) denotes the subgroup of \(G\) generated by \(\Label_G(M(\mu, p))\)
(resp.\ the subgroup of \(H\) generated by \(\rho(\Label_\Sigma(M(\mu, p)))\)).
One can easily extend \(\bar{f}_{\mu, p}\) to a unique surjective group homomorphism
\(\tilde{f}_{\mu, p}\colon G(\mu, p) \to H(\mu, p)\).
The remaining task is to prove that at least one of the \(H(\mu, p)\)'s is a finite index subgroup of \(H\).

\begin{lem}\label[lem]{lem-downward}
	Each \(M(\mu, p)\) is downward closed with respect to \(\scsubword\),
	i.e., if \(\sigma\) is an element of \(M(\mu, p)\) and \(\tau\) is a closed path in \(\Gamma\) with \(\source(\tau) = p\)
	such that \(\tau \scsubword \sigma\), then \(\tau \in M(\mu, p)\).
\end{lem}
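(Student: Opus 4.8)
The plan is to verify that, under the stated hypothesis, the path \(\tau\) is pumpable in \(\mu\); since \(\source(\tau) = p\) by assumption, \cref{def-M} then yields \(\tau \in M(\mu, p)\) at once. First I would dispose of the degenerate cases: if \(\tau = \varepsilon\) there is nothing to prove, while if \(\sigma = \varepsilon\) then \(\tau \scsubword \sigma\) forces \(\tau = \varepsilon\); so assume henceforth that \(\sigma\) and \(\tau\) are non-empty. Unravelling \(\tau \scsubword \sigma\), I would fix a decomposition \(\sigma = \sigma_0 \tau_1 \sigma_1 \tau_2 \dotsm \tau_k \sigma_k\) with \(\tau = \tau_1 \tau_2 \dotsm \tau_k\), where after merging consecutive \(\sigma\)-blocks whenever the \(\tau_i\) between them is empty we may assume \(k \geq 1\) and every \(\tau_i\) non-empty. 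A routine inspection of sources and targets, using that \(\sigma\) and \(\tau\) are both closed paths at \(p\), then shows that each \(\sigma_i\) is itself a closed path: \(\sigma_0\) and \(\sigma_k\) are closed at \(p\), and \(\sigma_i\) is closed at \(q_i := \target(\tau_i) = \source(\tau_{i + 1})\) for \(1 \leq i \leq k - 1\).

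The heart of the proof is to rearrange a \emph{power} of \(\sigma\) into a closed path at \(p\) that has \(\tau\) as a subword and carries the same \(G\)-label as that power. By \cref{lem-monoid} we have \(\sigma^2 \in M(\mu, p)\), so \(\sigma^2\) is pumpable in \(\mu\): I would fix an accepting path \(\alpha = \alpha_0 e_1 \alpha_1 e_2 \dotsm e_n \alpha_n\) dominating \(\mu = e_1 e_2 \dotsm e_n\) with \(\sigma^2 \subword \alpha_j\), say \(\alpha_j = \alpha_j' \sigma^2 \alpha_j''\), and then put
\[
	\sigma'' := \tau \cdot \sigma_0^2 \cdot (\tau_1 \sigma_1^2)(\tau_2 \sigma_2^2) \dotsm (\tau_k \sigma_k^2).
\]
Following the vertices \(p \to q_1 \to \dotsb \to q_{k - 1} \to p\) --- first while reading \(\tau = \tau_1 \dotsm \tau_k\), then again while reading the trailing product --- one checks that \(\sigma''\) is a genuine closed path at \(p\); comparing multiplicities shows that \(\sigma''\) and \(\sigma^2\) have exactly the same multiset of edges, so \(\Label_G(\sigma'') = \Label_G(\sigma^2)\) since \(G\) is abelian; and \(\tau \subword \sigma''\) because \(\tau\) is a prefix of \(\sigma''\).

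It then remains to substitute \(\sigma''\) for the chosen occurrence of \(\sigma^2\) inside \(\alpha_j\). As \(\sigma''\) and \(\sigma^2\) are both closed paths at \(p\) with equal \(G\)-label, the path \(\alpha^* := \alpha_0 e_1 \dotsm e_j (\alpha_j' \sigma'' \alpha_j'') e_{j + 1} \dotsm e_n \alpha_n\) is again an accepting path, with \(\Label_G(\alpha^*) = \Label_G(\alpha) = 0_G\); it still dominates \(\mu\) via the displayed decomposition; and \(\tau \subword \sigma'' \subword \alpha_j' \sigma'' \alpha_j''\). Hence \(\tau\) satisfies \cref{def-pumpable}, i.e., is pumpable in \(\mu\), and \(\tau \in M(\mu, p)\) by \cref{def-M}, as required.

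The step I expect to be the real obstacle is the construction of \(\sigma''\). One cannot simply delete the inserted loops \(\sigma_1, \dotsc, \sigma_{k - 1}\) from \(\sigma\) to recover \(\tau\), for that alters the \(G\)-label of the ambient accepting path; nor can one in general re-park the lost label as a single loop based at \(p\), since each \(\sigma_i\) (\(1 \leq i \leq k - 1\)) is anchored at its own vertex \(q_i\), and the edges left over after deleting \(\tau\) from \(\sigma\) need not form a connected closed walk through \(p\). Passing from \(\sigma\) to \(\sigma^2\) is exactly what supplies a \emph{second} copy of every \(\tau_i\), furnishing a backbone along which one can revisit each \(q_i\) and reinsert \(\sigma_i^2\) there while keeping the whole a single closed path at \(p\) of unchanged label. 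The remaining fussy points --- empty \(\tau_i\) or \(\sigma_i\), the boundary cases \(j = 0\) and \(j = n\), and \(\mu = \varepsilon\) --- are routine bookkeeping.
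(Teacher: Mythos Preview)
Your proof is correct and follows essentially the same route as the paper's: both pass to \(\sigma^2\) via \cref{lem-monoid}, and both replace the occurrence of \(\sigma^2\) inside the accepting path by \(\tau\) followed by a second traversal along the backbone with each \(\sigma_i\) doubled, namely \(\tau \cdot \sigma_0^2 \tau_1 \sigma_1^2 \dotsm \tau_k \sigma_k^2\). The only cosmetic difference is that the paper decomposes \(\tau\) into single edges \(e_1' \dotsm e_k'\) rather than blocks \(\tau_1 \dotsm \tau_k\), and your explicit multiset-of-edges check for \(\Label_G(\sigma'') = \Label_G(\sigma^2)\) is a welcome clarification of a step the paper leaves implicit.
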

\begin{proof}
	Suppose that \(\tau = e_1' e_2' \dotsm e_k'\) (\(k \geq 0, e_i' \in E\))
	and \(\sigma = \sigma_0 e_1' \sigma_1 e_2' \dotsm e_k' \sigma_k\) (\(\sigma_i \in E^*\)).
	Each \(\sigma_i\) is a closed path in \(\Gamma\).
	Since, by \cref{lem-monoid}, \(\sigma^2\) is pumpable in \(\mu = e_1 e_2 \dotsm e_n\) (\(n \geq 0, e_i \in E\)),
	there exists an accepting path \(\alpha = \alpha_0 e_1 \alpha_1 e_2 \alpha_2 \dotsm e_n \alpha_n\) dominating \(\mu\)
	such that \(\sigma^2 \scsubword \alpha_i\) for some \(i \in \{0, 1, \dotsc, n\}\).
	If \(\alpha_i = \alpha_i' \sigma^2 \alpha_i''\), then the path
	\begin{equation}
		\gamma = \alpha_0 e_1 \alpha_1 e_2 \alpha_2 \dotsm e_i
		(\alpha_i' \cdot \tau \cdot (\sigma_0^2 e_1' \sigma_1^2 e_2' \sigma_2^2 \dotsm e_k' \sigma_k^2) \cdot \alpha_i'')
		e_{i + 1} \dotsm e_n \alpha_n \label{eq-downward}
	\end{equation}
	is an accepting path in \(A\) (\Cref{fig-downward}).
	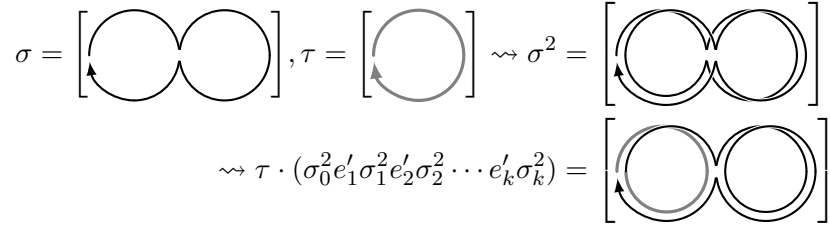
\begin{figure}
		\centering
		\begin{align*}
			\sigma = \mleft[
				\begin{tikzpicture}[scale = 0.6, baseline = (dummy.base)]
					\node (dummy) at (0, 0) {\phantom{x}};
					\draw[thick, line cap = round] (-2, 0) arc[start angle = 180, delta angle = -172, radius = 1];
					\draw[thick, line cap = round] (2, 0) arc[start angle = 0, delta angle = 172, radius = 1];
					\draw[thick, line cap = round] (2, 0) arc[start angle = 0, delta angle = -172, radius = 1];
					\draw[thick, line cap = round] (-1, -1) arc[start angle = -90, delta angle = 82, radius = 1];
					\draw[thick, line cap = round, -latex] (-1, -1) arc[start angle = -90, delta angle = -82, radius = 1];
				\end{tikzpicture}
			\mright],
			\tau = \mleft[
				\begin{tikzpicture}[scale = 0.6, baseline = (dummy.base)]
					\node (dummy) at (-0.5, 0) {\phantom{x}};
					\draw[very thick, gray, line cap = round, -latex] (-2, 0) arc[start angle = 180, delta angle = -360 + 8, radius = 1];
				\end{tikzpicture}
			\mright]
			\leadsto
			\sigma^2 &= \mleft[
				\begin{tikzpicture}[scale = 0.6, baseline = (dummy.base)]
					\node (dummy) at (0, 0) {\phantom{x}};
					\draw[thick, line cap = round] (-2, 0) arc[start angle = 180, delta angle = -172, radius = 1];
					\draw[thick, line cap = round] (2, 0) arc[start angle = 0, delta angle = 172, radius = 1];
					\draw[thick, line cap = round] (2, 0) arc[start angle = 0, delta angle = -172, radius = 1];
					\draw[thick, line cap = round] (-0.9, -0.9) arc[start angle = -90, delta angle = 82, radius = 0.9];
					\draw[thick, line cap = round] (-0.9, -0.9) arc[start angle = -90, delta angle = -90, radius = 0.9];
					\draw[preaction = {draw = white, ultra thick}, thick, line cap = round] (-1.8, 0) arc[start angle = 180, delta angle = -172, radius = 1]
						(2.2, 0) arc[start angle = 0, delta angle = 172, radius = 1];
					\draw[preaction = {draw = white, ultra thick}, thick, line cap = round] (2.2, 0) arc[start angle = 0, delta angle = -172, radius = 1]
						(-0.9, -1.1) arc[start angle = -90, delta angle = 82, radius = 1.1];
					\draw[thick, line cap = round, -latex] (-0.9, -1.1) arc[start angle = -90, delta angle = -82, radius = 1.1];
				\end{tikzpicture}
			\mright]\\
			\leadsto
			\tau \cdot (\sigma_0^2 e_1' \sigma_1^2 e_2' \sigma_2^2 \dotsm e_k' \sigma_k^2) &= \mleft[
				\begin{tikzpicture}[scale = 0.6, baseline = (dummy.base)]
					\node (dummy) at (0, 0) {\phantom{x}};
					\draw[very thick, gray, line cap = round] (-2.2, 0) arc[start angle = 180, delta angle = -180, radius = 1];
					\draw[very thick, gray, line cap = round] (-0.2, 0) arc[start angle = 0, delta angle = -180, radius = 0.9];
					\draw[preaction = {draw = white, ultra thick}, thick, line cap = round] (-2, 0) arc[start angle = 180, delta angle = -172, radius = 1];
					\draw[thick, line cap = round] (2, 0) arc[start angle = 0, delta angle = 172, radius = 1];
					\draw[thick, line cap = round] (2, 0) arc[start angle = 0, delta angle = -180, radius = 0.9];
					\draw[preaction = {draw = white, ultra thick}, thick, line cap = round] (0.2, 0) arc[start angle = 180, delta angle = -180, radius = 1];
					\draw[thick, line cap = round] (2.2, 0) arc[start angle = 0, delta angle = -172, radius = 1.1];
					\draw[thick, line cap = round] (-1.1, -1.1) arc[start angle = -90, delta angle = 82, radius = 1.1];
					\draw[thick, line cap = round, -latex] (-1.1, -1.1) arc[start angle = -90, delta angle = -82, radius = 1.1];
				\end{tikzpicture}
			\mright]
		\end{align*}
		\caption{Construction of the path \(\tau \cdot (\sigma_0^2 e_1' \sigma_1^2 e_2' \sigma_2^2 \dotsm e_k' \sigma_k^2)\)
			in \eqref{eq-downward}}\label{fig-downward}
	\end{figure}
\end{proof}

\begin{lem}\label[lem]{lem-shrink}
	Let \(\sigma \in M(\mu, p)\) and \(\omega \subword \sigma\) be a path.
	Then there exist two paths \(\omega_1, \omega_2 \in E^{< \card{V}}\)
	such that \(\omega_1 \omega \omega_2 \in M(\mu, p)\).
\end{lem}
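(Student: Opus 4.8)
The plan is to build $\omega_1$ and $\omega_2$ from shortened remnants of $\sigma$ and then appeal to the downward closure of $M(\mu,p)$ proved in \cref{lem-downward}. First I would dispose of the case $\omega = \varepsilon$, which in particular covers $\sigma = \varepsilon$: here $\omega_1 = \omega_2 = \varepsilon$ works, since $\varepsilon \in M(\mu,p)$ by \cref{def-M}. So assume from now on that $\omega$ is a non-empty path; put $q_1 = \source(\omega)$ and $q_2 = \target(\omega)$, and observe that $\sigma$ is then a non-empty closed path with $\source(\sigma) = \target(\sigma) = p$. Since $\omega \subword \sigma$, we may write $\sigma = \pi_1 \omega \pi_2$ for some $\pi_1, \pi_2 \in E^*$; being a prefix and a suffix of the path $\sigma$, each of $\pi_1$ and $\pi_2$ is itself a path in $\Gamma$, and $\pi_1 \omega$ is a path with source $p$ while $\omega \pi_2$ is a path with target $p$.

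Next I would shorten $\pi_1$ and $\pi_2$ by repeated deletion of cycles. If a path has length at least $\card{V}$, then it visits more than $\card{V}$ vertices, counted with multiplicity, so it passes through some vertex twice and hence factors as $\delta_1 \kappa \delta_2$ with $\kappa$ a non-empty closed subpath; replacing this factorization by $\delta_1 \delta_2$ preserves the source and the target and gives $\delta_1 \delta_2 \scsubword \delta_1 \kappa \delta_2$. Applying this repeatedly to $\pi_1$ and to $\pi_2$, and using that $\scsubword$ is transitive, I obtain paths $\omega_1, \omega_2 \in E^{<\card{V}}$ with $\omega_1 \scsubword \pi_1$ and $\omega_2 \scsubword \pi_2$ such that $\omega_1 \omega$ still has source $p$ and $\omega \omega_2$ still has target $p$. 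In particular $\omega_1 \omega \omega_2$ is a closed path in $\Gamma$ with $\source(\omega_1 \omega \omega_2) = p$.

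Finally, since $\scsubword$ is compatible with concatenation, $\omega_1 \omega \omega_2 \scsubword \pi_1 \omega \pi_2 = \sigma$, and $\sigma \in M(\mu,p)$, so \cref{lem-downward} yields $\omega_1 \omega \omega_2 \in M(\mu,p)$, as required. I do not anticipate a real obstacle here: the only points calling for a little care are that prefixes, suffixes, and cycle-deletions of a path are again paths with the expected endpoints, and that the scattered subword order is preserved under concatenation, all of which are immediate from the definitions recalled in the preliminaries.
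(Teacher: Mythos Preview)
Your proof is correct and follows essentially the same approach as the paper: both arguments write \(\sigma = \pi_1 \omega \pi_2\), use the pigeonhole principle to delete closed subpaths from \(\pi_1\) and \(\pi_2\) until each has length less than \(\card{V}\), and then invoke \cref{lem-downward} to conclude that the shortened concatenation lies in \(M(\mu,p)\). The only cosmetic difference is that the paper phrases the cycle-deletion as a minimality-plus-contradiction argument rather than an explicit iteration, and does not separate out the case \(\omega = \varepsilon\).
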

\begin{proof}
	Let \((\omega_1, \omega_2) \in E^* \times E^*\) be a pair of two paths
	such that \(\omega_1 \omega \omega_2 \in M(\mu, p)\) and \(\max\{\len{\omega_1}, \len{\omega_2}\}\) is minimum.
	Such a pair exists since \(\omega \subword \sigma \in M(\mu, p)\).
	Suppose the contrary that \(\max\{\len{\omega_1}, \len{\omega_2}\} \geq \card{V}\), say \(\len{\omega_1} \geq \card{V}\).
	By the pigeonhole principle, \(\omega_1\) must visit some vertex \(p \in V\) at least twice.
	That is, there exist three paths \(\alpha, \beta, \gamma\)
	such that \(\omega_1 = \alpha \beta \gamma\) and \(\beta\) is a non-empty closed path.
	Now we have \(\alpha \gamma \omega \omega_2 \scsubword \omega_1 \omega \omega_2 \in M(\mu, p)\),
	and \cref{lem-downward} implies \(\alpha \gamma \omega \omega_2 \in M(\mu, p)\),
	which contradicts the minimality of \((\omega_1, \omega_2)\).
\end{proof}

\begin{proof}[Proof of \cref{main-theorem}]
	Let \(h \in H\) and fix a word \(v \in \Sigma^*\) such that \(\rho(v) = h\).
	There exists a word \(\bar{v} \in \Sigma^*\) such that \(v \bar{v} \in \WP_\rho(H)\).
	Define
	\[
		N = 1 + \max\{\, \len{\mu} \mid \text{\(\mu \in E^*\) is a minimal accepting path in \(A\)} \,\},
	\]
	and then \(N < \infty\) by \cref{rem-Higman}.
	Since \((v \bar{v})^N \in \WP_\rho(H)\),
	there exists an accepting path
	\begin{equation}
		\alpha = \omega_1 \bar{\omega}_1 \omega_2 \bar{\omega}_2 \dotsm \omega_N \bar{\omega}_N \label{eq-v}
	\end{equation}
	in \(A\) such that \(\Label_\Sigma(\omega_i) = v\) and \(\Label_\Sigma(\bar{\omega}_i) = \bar{v}\) for \(i = 1, 2, \dotsc, N\).
	Let \(\mu = e_1 e_2 \dotsm e_n\) (\(e_i \in E\)) be a minimal accepting path such that \(\alpha\) dominates \(\mu\).
	Then we have another decomposition
	\begin{equation}
		\alpha = \alpha_0 e_1 \alpha_1 e_2 \alpha_2 \dotsm e_n \alpha_n \label{eq-mu}
	\end{equation}
	for some closed paths \(\alpha_0, \alpha_1, \dotsc, \alpha_n \in E^*\).
	Since \(N > n\) and each \(e_i\) in the decomposition \eqref{eq-mu} is contained in
	at most one \(\omega_i\) in the decomposition \eqref{eq-v},
	at least one of the \(\omega_i\)'s is disjoint from all \(e_i\)'s,
	i.e., there exist \(i \in \{1, 2, \dotsc, N\}\) and \(j \in \{0, 1, \dotsc, n\}\) such that \(\omega_i \subword \alpha_j\).
	Since \(\alpha_j\) is a pumpable closed path in \(\mu\), \(\alpha_j\) is an element of \(M(\mu, \source(\alpha_j))\).
	By \cref{lem-shrink}, there exist \(\alpha_j', \alpha_j'' \in E^{< \card{V}}\)
	such that \(\alpha_j' \omega_i \alpha_j'' \in M(\mu, \source(\alpha_j))\).
	Then we have \(\len{\Label_\Sigma(\alpha_j')}, \len{\Label_\Sigma(\alpha_j'')} < \card{V}\) and
	\(\rho(\Label_\Sigma(\alpha_j')) \rho(\Label_\Sigma(\omega_i)) \rho(\Label_\Sigma(\alpha_j'')) \in H(\mu, \source(\alpha))\),
	hence
	\[
		h = \rho(v) = \rho(\Label_\Sigma(\omega_i))
		\in \rho(\Label_\Sigma(\alpha_j'))^{-1} H(\mu, \source(\alpha)) \rho(\Label_\Sigma(\alpha_j''))^{-1}.
	\]

	From the above argument, we obtain
	\[
		H = \bigcup \mleft\{\, h_1^{-1} H(\mu, p) h_2^{-1} \setmid
		\begin{gathered}
			\text{\(\mu\) is a minimal accepting path in \(A\),}\\
			\text{\(p \in V\), and \(h_1, h_2 \in \rho(\Sigma^{< \card{V}})\)} \,
		\end{gathered}
		\mright\},
	\]
	where the right-hand side is a finite union of cosets of \(H\) by \cref{rem-Higman}.
	Thus, by B.\ H.\ Neumann's lemma~\cite{MR62122}*{(4.1) \textsc{Lemma} and (4.2)},
	at least one of \(H(\mu, p)\)'s has finite index in \(H\).
\end{proof}

\section*{Acknowledgements}

The author would like to thank Ryoma Sin'ya for helpful comments and encouragement.

\bibliographystyle{plain}
\bibliography{refs}

\end{document}